\newtheorem{theorem}{\bf Theorem}[section]
\newtheorem{lemma}[theorem]{\bf Lemma}
\newtheorem{corollary}[theorem]{\bf Corollary}
\newtheorem{conjecture}{\bf Conjecture}
\numberwithin{equation}{section}
\begin{document}
\title{{\Large Ramsey numbers of $4$-uniform loose  cycles}}
\author{\small  G.R. Omidi$^{\textrm{a},\textrm{b},1}$, M. Shahsiah$^{\textrm{b}}$\\
\small  $^{\textrm{a}}$Department of Mathematical Sciences,
Isfahan University
of Technology,\\ \small Isfahan, 84156-83111, Iran\\
\small  $^{\textrm{b}}$School of Mathematics, Institute for
Research
in Fundamental Sciences (IPM),\\
\small  P.O.Box: 19395-5746, Tehran,
Iran\\
\small \texttt{E-mails: romidi@cc.iut.ac.ir,
m.shahsiah@math.iut.ac.ir}}
\date {}
\maketitle \footnotetext[1] {\tt This research is partially
carried out in the IPM-Isfahan Branch and in part supported
by a grant from IPM (No. 92050217).} \vspace*{-0.5cm}

\begin{abstract}Gy\'{a}rf\'{a}s, S\'{a}rk\"{o}zy and
Szemer\'{e}di proved that the $2$-color Ramsey number
$R(\mathcal{C}^k_n,\mathcal{C}^k_n)$ of a $k$-uniform loose cycle
$\mathcal{C}^k_n$ is asymptotically $\frac{1}{2}(2k-1)n,$  generating the same result for $k=3$ due to Haxell et al. Concerning their results, it is
 conjectured that
 for every $n\geq m\geq 3$ and $k\geq 3,$
$$R(\mathcal{C}^k_n,\mathcal{C}^k_m)=(k-1)n+\lfloor\frac{m-1}{2}\rfloor.$$
In $2014$, the case $k=3$  is  proved by the
authors. Recently, the authors showed that this conjecture is true for $n=m\geq 2$ and $k\geq 8$. Their method can be used for case $n=m\geq 2$ and $k=7,$ but more details are required. The only open cases for the above conjecture when $n=m$ are $k=4,5,6.$ Here we investigate to the case $k=4$
and  we show that  the conjecture holds
for $k=4$  when $n>m$ or $n=m$ is odd. When $n=m$ is even, we show that
 $R(\mathcal{C}^4_n,\mathcal{C}^4_n)$ is between two values with difference one.

\noindent{\small { Keywords:} Ramsey number, Uniform hypergraph, Loose path, Loose cycle.}\\
{\small AMS subject classification: 05C65, 05C55, 05D10.}

\end{abstract}


\section{ Introduction}

For given $k$-uniform hypergraphs  $\mathcal{G}$ and
$\mathcal{H},$
  the \textit{Ramsey number} $R(\mathcal{G},\mathcal{H})$ is
  the smallest positive integer $N$ such that in every red-blue coloring of the
  edges of the complete $k$-uniform hypergraph $\mathcal{K}^k_N$,  there is  a red copy of $\mathcal{G}$ or a blue copy of $\mathcal{H}.$
 A {\it $k$-uniform  loose cycle} $\mathcal{C}_n^k$ (shortly, a
{\it cycle of length $n$})  is a hypergraph with vertex set
$\{v_1,v_2,\ldots,v_{n(k-1)}\}$ and with the set of $n$ edges
$e_i=\{v_{(i-1)(k-1)+1},v_{(i-1)(k-1)+2},\ldots, v_{(i-1)(k-1)+k}\}$, $1\leq i\leq n$, where
we use mod $n(k-1)$ arithmetic.
Similarly, a
{\it $k$-uniform  loose path} $\mathcal{P}_n^k$ (shortly, a {\it
path of length $n$}) is a hypergraph with vertex set
$\{v_1,v_2,\ldots,v_{n(k-1)+1}\}$  and with the set of $n$ edges
$e_i=\{v_{(i-1)(k-1)+1},v_{(i-1)(k-1)+2},\ldots, v_{(i-1)(k-1)+k}\}$, $1\leq i\leq n$. For
an edge $e_i=\{v_{(i-1)(k-1)+1},v_{(i-1)(k-1)+2},\ldots, v_{i(k-1)+1}\}$ of a given loose path (also a given loose cycle)
$\mathcal{K}$, the first vertex ($v_{(i-1)(k-1)+1}$) and the last vertex ($v_{i(k-1)+1}$) are denoted by
$f_{\mathcal{K},e_i}$ and
$l_{\mathcal{K},e_i}$, respectively. In
this paper, we consider the problem of finding the 2-color Ramsey
number of $4$-uniform loose paths and cycles.\\

The investigation of the Ramsey numbers of hypergraph loose
cycles was initiated by  Haxell et al. in \cite{Ramsy number of
loose cycle}. They proved that
$R(\mathcal{C}^3_n,\mathcal{C}^3_n)$ is asymptotically
$\frac{5}{2}n$.
This result was extended by Gy\'{a}rf\'{a}s, S\'{a}rk\"{o}zy and
Szemer\'{e}di \cite{Ramsy number of loose cycle for k-uniform} to
$k$-uniform loose cycles. More precisely, they proved that for all
$\eta>0$ there exists $n_0=n_0(\eta)$ such that for every $n> n_0,$ every 2-coloring
of $\mathcal{K}^{k}_N$ with $N=(1+\eta)\frac{1}{2}(2k-1)n$
contains a monochromatic copy of $\mathcal{C}^k_n.$\\

 In
\cite{subm}, Gy\'{a}rf\'{a}s and Raeisi  determined the value of
the Ramsey number of a $k$-uniform loose triangle and quadrangle.
 Recently, we proved  the following general result on the Ramsey
numbers of loose paths and loose cycles in 3-uniform hypergraphs.

\begin{theorem}{\rm \cite{The Ramsey number of loose paths  and loose cycles
in 3-uniform hypergraphs}}\label{Omidi} For every $n\geq m\geq 3,$
\begin{eqnarray*}\label{5}
R(\mathcal{P}^3_n,\mathcal{P}^3_m)=R(\mathcal{P}^3_n,\mathcal{C}^3_m)=R(\mathcal{C}^3_n,\mathcal{C}^3_m)+1=2n+\Big\lfloor\frac{m+1}{2}\Big\rfloor.
\end{eqnarray*}
\end{theorem}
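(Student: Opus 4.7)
The plan is to prove the matching lower and upper bounds and then link the three Ramsey numbers in the chain. For the lower bounds, I would use the standard partition coloring: split $N-1$ vertices into $A \cup B$ with $|A| = 2n$ (or $|A| = 2n-1$ for the cycle-cycle bound) and $|B| = \lfloor (m-1)/2 \rfloor$, color all triples contained in $A$ red and all triples meeting $B$ blue. The set $A$ is too small to host a red $\mathcal{P}^3_n$ (which needs $2n+1$ vertices) or a red $\mathcal{C}^3_n$ (which needs $2n$ vertices). On the blue side, every edge of a loose path or loose cycle of length $m$ would have to meet $B$; since each vertex of $B$ lies in at most two such edges, covering all $m$ edges forces $|B| \geq \lceil m/2 \rceil > \lfloor (m-1)/2 \rfloor$, a contradiction. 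This single construction furnishes lower bounds for all three Ramsey numbers at once.

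The main work is the upper bound $R(\mathcal{P}^3_n,\mathcal{P}^3_m) \leq 2n + \lfloor (m+1)/2 \rfloor$. I would argue by induction on $m$ (with $n$ free), taking the small cases $m \in \{3,4\}$ from Gy\'{a}rf\'{a}s and Raeisi's calculations of loose triangle and quadrangle Ramsey numbers. For the inductive step, set $N = 2n + \lfloor (m+1)/2 \rfloor$, fix a coloring of $\mathcal{K}^3_N$ with no red $\mathcal{P}^3_n$, and let $P$ be a longest red loose path, of length $r \leq n-1$. Writing $U = V \setminus V(P)$, we have $|U| = N - (2r+1) \geq \lfloor (m+1)/2 \rfloor + 1$. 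The non-extendability of $P$ forces every triple of the form $\{f_{P,e_1}, x, y\}$ with $x, y \in U$ (and symmetrically at $l_{P,e_r}$) to be blue, since a red such triple would extend $P$; analogous reasoning forces blue triples involving certain interior vertices near the endpoints of $P$. From this rich blue substructure, either a greedy construction or a reduction to the inductive hypothesis on a suitable sub-coloring yields a blue $\mathcal{P}^3_m$, contradicting the assumption.

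For $R(\mathcal{P}^3_n,\mathcal{C}^3_m) = R(\mathcal{P}^3_n,\mathcal{P}^3_m)$, I would refine the above argument to directly produce a blue $\mathcal{C}^3_m$: the blue substructure near the endpoints of $P$ typically leaves enough slack to close the blue loose path of length $m$ into a cycle through a spare vertex, using $n \geq m$ to guarantee $N \geq 2m+2$. For $R(\mathcal{C}^3_n,\mathcal{C}^3_m) + 1 = 2n + \lfloor (m+1)/2 \rfloor$, I would rerun essentially the same analysis on $N-1$ vertices, now aiming for a red $\mathcal{C}^3_n$ (which uses $2n$, one fewer vertex than $\mathcal{P}^3_n$) or a blue $\mathcal{C}^3_m$; the one saved vertex matches precisely the vertex identified when closing a loose path of length $n$ (resp.\ $m$) into a loose cycle of the same length.

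The main obstacle will be the inductive step for the path-path upper bound: assembling the blue triples forced by non-extendability of $P$ into a blue $\mathcal{P}^3_m$ of length exactly $m$ rather than $m-1$ requires a careful case analysis based on the parity of $m$, on whether $r = n-1$ or $r < n-1$, and on the precise distribution of blue triples between $U$ and $V(P)$. A secondary subtlety lies in the closing step for the cycle Ramsey numbers, where one must ensure that the endpoints of the produced monochromatic path of maximal length can be joined via a further monochromatic triple within the available vertex budget.
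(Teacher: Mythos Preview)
This theorem is quoted from \cite{The Ramsey number of loose paths  and loose cycles in 3-uniform hypergraphs} and is \emph{not} proved in the present paper, so there is no proof here to compare against directly. That said, the method of the cited paper is visibly the $k=3$ prototype of what this paper does for $k=4$: one first proves a structural lemma of the type ``if $\mathcal{H}$ has no red $\mathcal{C}^3_n$ but contains a red $\mathcal{C}^3_{n-1}$, then it contains a blue $\mathcal{C}^3_m$'' (the analogue of Lemma~\ref{cn-1 implies cm for n>m}), established via configuration lemmas analogous to Lemmas~\ref{spacial configuration1}--\ref{spacial configuration2} and Corollary~\ref{there is a Pl}. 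The cycle--cycle Ramsey number then follows by induction on $m+n$ exactly as in Theorem~\ref{main theorem4}, and the path results are deduced from the cycle result via a reduction of the type recorded in Theorem~\ref{connection}.

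Your plan is organized differently: you take a \emph{longest red path} and try to manufacture the blue target directly from the forced blue triples around its endpoints, inducting on $m$. The lower-bound construction you give is the correct one. The upper-bound outline, however, has a real gap at the decisive point. Knowing that $\{f_{P,e_1},x,y\}$ and $\{l_{P,e_r},x,y\}$ are blue for all $x,y\in U$ gives you many blue edges, but they all pass through two fixed vertices; by itself this yields at most a short blue path, not one of length $m$. To push further you must repeatedly replace interior edges of $P$ by red paths using vertices of $U$ (maximality in the stronger sense used in this paper, not just ``longest''), thereby generating blue $\varpi$-type configurations along the body of $P$. Without that mechanism your sentence ``from this rich blue substructure \ldots\ yields a blue $\mathcal{P}^3_m$'' does not go through, and the induction on $m$ alone does not give you access to enough of $V(P)$ to build a path of length $m$. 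Likewise, the closing step for cycles needs a concrete edge, not just a vertex count; in the cited paper this is handled by the configuration machinery rather than by an endpoint argument.
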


In \cite{Ramsey numbers of loose cycles in uniform hypergraphs},
we presented another proof of Theorem \ref{Omidi} and posed the
following conjecture.

\begin{conjecture}\label{our conjecture}
Let $k\geq 3$ be an integer number. For every $n\geq m \geq 3$,
\begin{eqnarray*}\label{2}
R(\mathcal{P}^k_{n},\mathcal{P}^k_{m})=R(\mathcal{P}^k_{n},\mathcal{C}^k_{m})=R(\mathcal{C}^k_n,\mathcal{C}^k_m)+1=(k-1)n+\lfloor\frac{m+1}{2}\rfloor.
\end{eqnarray*}
\end{conjecture}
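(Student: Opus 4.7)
The plan is to prove the three equalities simultaneously. For the lower bound, I would use a uniform construction: partition the vertex set of $\mathcal{K}^k_N$ into $A\cup B$ with $|A|=(k-1)n$ and $|B|=\lfloor(m+1)/2\rfloor-1$ (so $N=(k-1)n+\lfloor(m+1)/2\rfloor-1$), and color red every edge lying inside $A$ and blue every other edge. There is no red $\mathcal{P}^k_n$ because a loose path of length $n$ needs $(k-1)n+1$ vertices; and there is no blue $\mathcal{P}^k_m$ because every blue edge meets $B$, and since consecutive edges of a loose path share at most one vertex, a blue loose path of length $m$ would use at least $\lceil m/2\rceil=\lfloor(m+1)/2\rfloor$ vertices of $B$. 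Shrinking $|A|$ to $(k-1)n-1$ gives the analogous bound for $R(\mathcal{C}^k_n,\mathcal{C}^k_m)$, since a loose cycle of length $n$ already needs $(k-1)n$ vertices.

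For the upper bound I would proceed by induction on $n$ (with $k$ and $m$ as parameters), aiming first at the strongest statement $R(\mathcal{P}^k_n,\mathcal{P}^k_m)\le (k-1)n+\lfloor(m+1)/2\rfloor$, and then deducing the cycle-path and cycle-cycle bounds via a closing step that uses vertices outside a maximum monochromatic path to wrap it into a monochromatic cycle. The inductive step considers a red-blue coloring of $\mathcal{K}^k_N$ with $N=(k-1)n+\lfloor(m+1)/2\rfloor$; by the inductive hypothesis we obtain a red $\mathcal{P}^k_{n-1}$ or a blue $\mathcal{P}^k_m$, and in the former case I would let $\mathcal{Q}$ be a red loose path of maximum length, assumed to be $n-1$, and analyze the $k$-sets formed by the endpoints $f_{\mathcal{Q},e_1}$ and $l_{\mathcal{Q},e_{n-1}}$ together with the $k-2+\lfloor(m+1)/2\rfloor$ vertices in $W:=V(\mathcal{K}^k_N)\setminus V(\mathcal{Q})$.

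The heart of the argument is a trichotomy at each endpoint: either a red edge through the endpoint inside $W$ extends $\mathcal{Q}$ to a red $\mathcal{P}^k_n$, or the edges hanging off $\mathcal{Q}$ into $W$ are predominantly blue and, combined with blue edges inside $W$ and blue edges touching the interior of $\mathcal{Q}$, produce a blue loose path of length $m$. The key combinatorial tool I would develop is a stability lemma analogous to the one used in the $k=3$ case of Theorem~\ref{Omidi}: if no blue $\mathcal{P}^k_m$ can be built this way, the coloring is essentially the extremal partition coloring, and this forces enough red edges to actually extend $\mathcal{Q}$. The closing step from path to cycle uses that $\lfloor(m+1)/2\rfloor$ shared vertices suffice to turn a blue $\mathcal{P}^k_{m-1}$ into $\mathcal{C}^k_m$ with one extra vertex, and analogously for the red side.

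The main obstacle I anticipate is the diagonal base case $n=m$. The excerpt itself records that $k=4,5,6$ with $n=m$ are precisely the open diagonal instances (the current paper resolving $k=4$ only partially), so any proof of the conjecture for all $k\ge 3$ must independently resolve these. A unified approach must therefore either induct on $n+m$ and establish a slightly stronger off-diagonal statement strong enough to imply the diagonal case, or provide a sharp (rather than asymptotic) $k$-uniform connected-matching argument refining the regularity-based method of Gy\'arf\'as-S\'ark\"ozy-Szemer\'edi. The accounting of shared vertices in loose paths, which produces the $\lfloor(m+1)/2\rfloor$ term, behaves subtly as $k$ grows, and keeping the stability lemma uniform in $k$ while simultaneously handling the low-$k$ diagonal cases is where I expect the real difficulty to lie.
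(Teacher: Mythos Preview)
The statement is a \emph{conjecture}, not a theorem: the paper does not prove it, and explicitly records that the diagonal cases $k\in\{4,5,6\}$ remain open (for $k=4$ with $n=m$ even the paper only pins $R(\mathcal{C}^4_n,\mathcal{C}^4_n)$ down to within one). So there is no paper proof to compare against, and your proposal cannot be a complete proof unless it resolves those open cases---which you yourself flag as the main obstacle in your final paragraph.

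Your lower-bound construction is correct and is exactly the standard one (the paper invokes it as Lemma~1 of \cite{subm}). The upper-bound sketch, however, is not a proof but a high-level plan: the ``stability lemma analogous to the one used in the $k=3$ case'' and the ``trichotomy at each endpoint'' are never formulated, and the closing step from paths to cycles is asserted rather than carried out. Where the paper does make progress (for $k=4$, $n>m$ or $n=m$ odd), its route is essentially the reverse of yours: rather than inducting on the path--path number and then closing to a cycle, it attacks $R(\mathcal{C}^4_n,\mathcal{C}^4_m)$ directly via structural lemmas (Lemmas~\ref{spacial configuration1}--\ref{spacial configuration2}) that locate blue $\varpi_S$-configurations inside any maximal red path, threads these into long blue paths (Corollary~\ref{there is a Pl}), proves the key reduction Lemma~\ref{cn-1 implies cm for n>m}, and only afterwards lifts the cycle--cycle result to the path cases via Theorem~\ref{connection}. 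A uniform-in-$k$ stability lemma sharp enough to handle the small-$k$ diagonal cases is precisely what is not known, so the gap you identify is genuine and currently unfilled.
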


\noindent Also,  the following theorem is obtained on the Ramsey number of loose paths and cycles in $k$-uniform hypergraphs  \cite{Ramsey numbers of loose cycles in uniform
hypergraphs}.

\begin{theorem}{\rm \cite{Ramsey numbers of loose cycles in uniform hypergraphs}}\label{connection}
Let $n\geq m\geq 2$ be given integers and
$R(\mathcal{C}^k_{n},\mathcal{C}^k_{m})=(k-1)n+\lfloor\frac{m-1}{2}\rfloor.$
Then
$R(\mathcal{P}^k_n,\mathcal{C}^k_m)=(k-1)n+\lfloor\frac{m+1}{2}\rfloor$
and
$R(\mathcal{P}^k_n,\mathcal{P}^k_{m-1})=(k-1)n+\lfloor\frac{m}{2}\rfloor$.
Moreover, for $n=m$ we have
$R(\mathcal{P}^k_n,\mathcal{P}^k_m)=(k-1)n+\lfloor\frac{m+1}{2}\rfloor$.
\end{theorem}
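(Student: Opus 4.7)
The plan is to deduce all three equalities from the assumed value $R(\mathcal{C}^k_n,\mathcal{C}^k_m)=(k-1)n+\lfloor(m-1)/2\rfloor$ by combining a standard extremal colouring for the lower bounds with a short ``cycle-to-path'' extension argument for the upper bounds. The key numerical identity $\lfloor(m+1)/2\rfloor=\lfloor(m-1)/2\rfloor+1$ shows that the target $N$ in Claims~1 and~3, and in Claim~2 when $m$ is even, equals $R(\mathcal{C}^k_n,\mathcal{C}^k_m)+1$, whereas for odd $m$ in Claim~2 the target equals $R(\mathcal{C}^k_n,\mathcal{C}^k_m)$ itself.

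For the lower bounds I would use the standard partition construction. Write $V=A\cup B$ with $|B|=(k-1)n$ and $|A|$ one less than the relevant ceiling, colour the edges lying inside $B$ red and every edge meeting $A$ blue. A red $\mathcal{P}^k_n$ must live in $B$ but needs $(k-1)n+1$ vertices, so none exists; a blue loose cycle of length $m$ or loose path of length $m-1$ has each vertex of $A$ in at most two of its consecutive edges and therefore requires at least $\lceil m/2\rceil$ respectively $\lceil(m-1)/2\rceil=\lfloor m/2\rfloor$ vertices of $A$, which $|A|$ is tuned to fall just short of. This gives the matching lower bound in all three claims.

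For the upper bounds, fix any red/blue colouring of $\mathcal{K}^k_N$ with $N$ equal to the claimed value. Since $N\geq R(\mathcal{C}^k_n,\mathcal{C}^k_m)$, the hypothesis applied to an $R(\mathcal{C}^k_n,\mathcal{C}^k_m)$-vertex subhypergraph yields a red $\mathcal{C}^k_n$ or a blue $\mathcal{C}^k_m$; moreover, when $N=R(\mathcal{C}^k_n,\mathcal{C}^k_m)+1$ the leftover vertex supplies at least one vertex outside the located monochromatic cycle. Because $\mathcal{P}^k_{m-1}\subseteq\mathcal{C}^k_m$, a blue $\mathcal{C}^k_m$ settles Claims~1 and~2 immediately, while for Claim~3 one swaps the colour roles and argues symmetrically. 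The real content is the remaining case: upgrading a red $\mathcal{C}^k_n$ with edges $e_1,\ldots,e_n$ and joint vertices $w_i=e_i\cap e_{i+1}$ to a red $\mathcal{P}^k_n$, which proceeds as follows. For any vertex $v$ outside the cycle and any $i$, if the modified edge $e'_i=(e_i\setminus\{w_i\})\cup\{v\}$ is red, then the sequence $e_{i+1},e_{i+2},\ldots,e_{i-1},e'_i$ is a red $\mathcal{P}^k_n$ with $v$ as an endpoint. Hence one may assume every such $e'_i$ is blue for every outside $v$ and every index $i$, and the task is to derive a blue $\mathcal{C}^k_m$ from this structure.

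The main obstacle is exactly this forced-blue-cycle construction. The number of outside vertices at one's disposal is $\lfloor(m+1)/2\rfloor$ or $\lfloor m/2\rfloor$, which matches the number of shared vertices needed in a loose $\mathcal{C}^k_m$ whose private vertices are drawn from the red $\mathcal{C}^k_n$. The plan is to select alternating indices $i_1<i_2<\cdots$ along the red cycle together with distinct outside vertices $v_1,v_2,\ldots$ and use the forced blue edges $(e_{i_j}\setminus\{w_{i_j}\})\cup\{v_j\}$ as the consecutive edges of the new blue cycle, choosing the spacing so that neighbouring blue edges meet in exactly the required shared vertex and the chain closes up. Handling the parity split between even and odd $m$, checking that the chosen blue edges intersect correctly, and performing the analogous surgery that converts a blue $\mathcal{C}^k_m$ into a blue $\mathcal{P}^k_m$ for Claim~3 constitute the genuinely combinatorial work; the reduction between the three claims afterwards is routine bookkeeping.
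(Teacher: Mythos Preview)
The paper does not prove this theorem; it is quoted from reference \cite{Ramsey numbers of loose cycles in uniform hypergraphs}, so there is no in-paper argument to compare against. I can therefore only comment on the soundness of your plan.

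Your lower bounds and the reduction ``find a monochromatic cycle, then try to open it into a path'' are standard and correct, and for Claims~2 and~3 the plan goes through: once every edge $(e_i\setminus\{w_i\})\cup\{v\}$ and $(e_i\setminus\{w_{i-1}\})\cup\{v\}$ is forced blue, the alternating chain
\[
A_j=(e_{2j-1}\setminus\{w_{2j-1}\})\cup\{v_j\},\qquad
B_j=(e_{2j}\setminus\{w_{2j-1}\})\cup\{v_j\}
\]
gives a blue loose \emph{path} of the right length using exactly the available outside vertices, so no closing-up is needed.

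The gap is in Claim~1 for odd $m$. A blue $\mathcal{C}^k_m$ built only from edges of the shape $(e_i\setminus\{\text{joint}\})\cup\{v\}$ is impossible when $k\ge 4$ and $m$ is odd: each such edge carries exactly one outside vertex, so a double count forces some outside vertex, say $v_t$, to lie in a single edge $E^\ast$ of the blue cycle; both cycle-joints of $E^\ast$ then come from the $k-1$ red-cycle vertices of a single $e_i$, so the neighbouring blue edge through an interior vertex $u$ of $e_i$ must also be based on $e_i$ and therefore meets $E^\ast$ in at least $k-2\ge 2$ vertices, violating ``loose''. Hence your proposed edge family cannot close the odd cycle, and the phrase ``choosing the spacing so that \dots\ the chain closes up'' hides a step that actually fails. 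Repairing this requires an additional ingredient --- typically one also forces blue the edges $(e_i\setminus\{u\})\cup\{v\}$ for interior $u$ (else swapping $u$ and $v$ yields a new red $\mathcal{C}^k_n$ and one iterates), which then supplies the missing closing edge. Without some such extra argument the odd-$m$ case of Claim~1 is not established.
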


\noindent  Using Theorem \ref{connection}, one can easily see that   Conjecture \ref{our conjecture} is equivalent to the following.

\begin{conjecture}\label{our conjecture2}
Let $k\geq 3$ be an integer number. For every $n\geq m \geq 3$,
\begin{eqnarray*}\label{2}
R(\mathcal{C}^k_n,\mathcal{C}^k_m)=(k-1)n+\lfloor\frac{m-1}{2}\rfloor.
\end{eqnarray*}
\end{conjecture}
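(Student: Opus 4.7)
My plan is to prove Conjecture~\ref{our conjecture2} by establishing matching lower and upper bounds for $R(\mathcal{C}^k_n,\mathcal{C}^k_m)$, with the lower bound direct and the upper bound by induction on $(n,m)$, using Theorem~\ref{Omidi} for the $k=3$ base, \cite{subm} for small $m$, and Theorem~\ref{connection} as a bridge between cycle and path Ramsey numbers. For the lower bound, partition the vertex set of $\mathcal{K}^k_{N-1}$, where $N=(k-1)n+\lfloor(m-1)/2\rfloor$, into $A\cup B$ with $|A|=(k-1)n-1$ and $|B|=\lfloor(m-1)/2\rfloor$; color an edge red iff it lies entirely in $A$, and blue otherwise. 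No red $\mathcal{C}^k_n$ can exist, since it would require $(k-1)n$ vertices in $A$; and in a blue $\mathcal{C}^k_m$, each of the $m$ edges meets $B$, while each vertex of a loose cycle lies in at most two edges, so $2|B|\ge m$, i.e., $|B|\ge\lceil m/2\rceil>\lfloor(m-1)/2\rfloor$, a contradiction.

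For the upper bound, induct on $(n,m)$ lexicographically. In a red-blue coloring of $\mathcal{K}^k_N$ with no red $\mathcal{C}^k_n$ and no blue $\mathcal{C}^k_m$, combine the inductive hypothesis for smaller pairs with Theorem~\ref{connection} to locate a long monochromatic loose \emph{path}, say a blue $\mathcal{P}$ of length as close to $m-1$ as possible, taken maximal under end-extension and end-rotation. The key step is to argue that the end-edges of $\mathcal{P}$ cannot admit a blue ``closure'' through outside vertices (else one obtains the forbidden blue $\mathcal{C}^k_m$) and cannot admit a blue extension (by the maximality of $\mathcal{P}$); these two obstructions force a sufficiently dense red structure between the link neighborhoods of the end-edges and the outside vertex set to contain, via a Gy\'{a}rf\'{a}s--Raeisi-type construction of a loose cycle in a dense host, a red $\mathcal{C}^k_n$ -- the desired contradiction. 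The counting inequality needed here is precisely that $N-(k-1)(\ell+1)\ge \lfloor(m-1)/2\rfloor$, which the choice of $N$ makes tight.

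The principal obstacle is the diagonal base case $n=m$, where the induction gains no slack and Theorem~\ref{connection} alone does not suffice. The paper handles $k=3$ (Theorem~\ref{Omidi}), $k\ge 7$, and $k=4$ with $n=m$ odd. A uniform treatment for arbitrary $k$, in particular for the open range $k\in\{4,5,6\}$ with $n=m$ even, appears to require a stability lemma: any coloring of $\mathcal{K}^k_{(k-1)m+\lfloor(m-1)/2\rfloor}$ with no monochromatic $\mathcal{C}^k_m$ must be structurally close to the extremal bipartite coloring described in the lower-bound construction, after which a combinatorial swap of a few edges produces the missing cycle. Formalising this stability lemma uniformly in $k$, and in particular controlling the end-edge overlap combinatorics that grow more intricate as $k$ increases, is the technical heart of the plan and the main place where new ideas beyond the present paper's $k=4$ analysis will be needed.
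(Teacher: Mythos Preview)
The statement is a \emph{conjecture} in the paper, not a theorem; the paper does not prove it in full. It establishes only the cases $k=4$ with $n>m$ (Theorem~\ref{main theorem4}) and $k=4$ with $n=m$ odd (Corollary~\ref{R(C^4_n,C^4_n)}), and for $k=4$ with $n=m$ even it obtains only the two-value bound $3n+\lfloor (n-1)/2\rfloor \le R(\mathcal{C}^4_n,\mathcal{C}^4_n)\le 3n+\lfloor (n-1)/2\rfloor+1$. So there is no ``paper's own proof'' of Conjecture~\ref{our conjecture2} to compare your attempt against, and any proposal that claims the full conjecture must supply genuinely new content.

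Your lower bound is correct and is exactly the standard extremal coloring (this is Lemma~1 of \cite{subm}, which the paper invokes). Your upper-bound outline, however, is not a proof but a sketch with an explicit hole: you yourself say the diagonal case $n=m$ for $k\in\{4,5,6\}$ ``appears to require a stability lemma'' whose formulation and proof are left open. That is precisely the gap the paper cannot close either (even for $k=4$, $n=m$ even). Moreover, the mechanism you describe for the inductive step --- locate a near-extremal blue path, argue its ends cannot close or extend, and read off a dense red host containing $\mathcal{C}^k_n$ --- is too coarse: the red structure forced by non-closure of a single blue path is far from a complete or even dense $k$-graph, and no Gy\'arf\'as--Raeisi-type lemma produces a long loose cycle from it without much finer control.

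For comparison, the paper's partial results for $k=4$ proceed quite differently. The engine is not a stability statement but a local ``configuration'' lemma (Lemmas~\ref{spacial configuration1}--\ref{spacial configuration2}): along any two consecutive edges of a maximal red path one can always find a blue $\varpi_S$-configuration (a blue $\mathcal{P}^4_2$) with flexible endpoints in the leftover set $W$. Chaining these gives Corollary~\ref{there is a Pl}, which manufactures one or two long blue paths covering most of the red path. The main reduction, Lemma~\ref{cn-1 implies cm for n>m}, then shows that a red $\mathcal{C}^4_{n-1}$ together with the absence of a red $\mathcal{C}^4_n$ forces a blue $\mathcal{C}^4_m$; the induction on $n+m$ (Theorems~\ref{main theorem4} and \ref{main theorem1}) runs on this lemma. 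None of this uses or proves a stability-to-extremal statement, and the residual $+1$ slack in the even diagonal case comes precisely from the extra vertex needed to make $|W|$ large enough for the configuration argument. If you want to close the conjecture, the concrete obstacle to attack is removing that extra vertex in Lemma~\ref{cn-1 implies cm for n>m} when $n=m$ is even, not a global stability lemma.
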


Recently, it is shown that Conjecture \ref{our conjecture2} holds for $n=m$ and $k\geq 8$ (see \cite{diagonal}). As we mentioned in \cite{diagonal}, our methods can be used to prove Conjecture \ref{our conjecture2} for $n=m$ and $k\geq 7.$ Therefore, based on Theorem \ref{Omidi}, the cases $k=4,5,6$ are the only open cases for Conjecture \ref{our conjecture2} when $n=m$ (the problem of determines the diagonal Ramsey number of loose cycles). In this paper, we investigate Conjecture \ref{our conjecture2} for  $k=4$. More precisely, we
 extend the method that used in  \cite{Ramsey numbers of
loose cycles in uniform hypergraphs} and we  show that
Conjecture \ref{our conjecture2}
holds for $k=4$ where
$n>m$ or $n=m$ is odd. When $n=m$ is even we show that $R(\mathcal{C}^4_n,\mathcal{C}^4_m)$  either is the value that is claimed in Conjecture \ref{our conjecture2} or is equal to this value minus one.
  Consequently, using Theorem
\ref{connection}, we obtained   the values of some Ramsey numbers
involving paths. Throughout the paper, by Lemma 1 of
\cite{subm}, it  suffices to
 prove only the  upper bound for the claimed Ramsey numbers.
Throughout the paper, for
a 2-edge colored hypergraph $\mathcal{H}$ we denote by
$\mathcal{H}_{\rm red}$ and $\mathcal{H}_{\rm blue}$ the induced
hypergraphs on red edges and blue edges, respectively. Also we
denote by $|\mathcal{H}|$ and $\|\mathcal{H}\|$ the number of
vertices and edges of $\mathcal{H}$, respectively.


\section{ Preliminaries}

In this section, we prove  some  lemmas that will be needed in our main results.
 Also, we recall some results from \cite{subm} and \cite{Ramsey numbers of loose cycles in uniform hypergraphs}.

 \begin{theorem}\label{R(Pk3,Pk3)} {\rm \cite{subm}}
For every $k\geq 3$,
\begin{itemize}
\item [\rm{(a)}]~
$R(\mathcal{P}^k_3,\mathcal{P}^k_3)=R(\mathcal{C}^k_3,\mathcal{P}^k_3)=R(\mathcal{C}^k_3,\mathcal{C}^k_3)+1=3k-1$,
\item [\rm{(b)}]~
$R(\mathcal{P}^k_4,\mathcal{P}^k_4)=R(\mathcal{C}^k_4,\mathcal{P}^k_4)=R(\mathcal{C}^k_4,\mathcal{C}^k_4)+1=4k-2$.
\end{itemize}
\end{theorem}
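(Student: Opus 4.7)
The plan is to prove only the upper bounds, since Lemma 1 of \cite{subm} supplies the matching lower bounds via an explicit two-coloring. I would treat part (a) in detail and let part (b) follow by the same extremal template applied on $4k-2$ vertices. In each subcase I fix a red/blue coloring of the complete $k$-uniform hypergraph on $N$ vertices (with $N = 3k-1$ for the path bounds and $N = 3k-2$ for the cycle bound), and, assuming for a contradiction that no monochromatic copy of the target exists, I run an extremal argument on a longest monochromatic loose path.

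Focus on $R(\mathcal{P}^k_3,\mathcal{P}^k_3) \leq 3k-1$. Let $(e_1,e_2)$ be a longest red loose path; by the contrary assumption it has length at most $2$, and the degenerate cases (length $0$ or $1$, where the red hypergraph is essentially a matching of at most two edges) reduce to a direct density argument on the blue edges. When the length is exactly $2$, let $x$ be the central vertex and set $A = e_1\setminus\{x\}$, $B = e_2\setminus\{x\}$, and $C = V\setminus(e_1\cup e_2)$, so that $|A|=|B|=k-1$ and $|C|=k$. Non-extendability forces every edge of the form $\{y\}\cup Z$ with $y\in A\cup B$ and $Z\in\binom{C}{k-1}$ to be blue (otherwise it would attach to an end of the red path, yielding a red $\mathcal{P}^k_3$). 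I would then combine these forced blue edges, together with at most one auxiliary blue edge inside $(A\cup B)\cup C$ whose blueness follows from a second application of maximality, to exhibit an explicit blue $\mathcal{P}^k_3$. The cycle variants $R(\mathcal{C}^k_3,\mathcal{P}^k_3) \leq 3k-1$ and $R(\mathcal{C}^k_3,\mathcal{C}^k_3) \leq 3k-2$ are handled on the same template, with the closing edge now required to form a triangle rather than extend a path.

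The hard part is the case $k \geq 4$: two forced blue edges $\{y_1\}\cup Z_1$ and $\{y_2\}\cup Z_2$ with $Z_1,Z_2\in\binom{C}{k-1}$ automatically intersect in at least $|Z_1|+|Z_2|-|C| = k-2 \geq 2$ vertices of $C$, so they cannot serve as two consecutive edges of a loose path. I would therefore need to include an auxiliary \emph{mixing} edge that uses more than one vertex of $A\cup B$ and fewer of $C$, and to maintain a careful ledger of intersection sizes to ensure that the constructed triple of blue edges satisfies exactly $|f_i\cap f_{i+1}|=1$ and $f_1\cap f_3=\emptyset$, rather than any tighter overlap. In part (b) this bookkeeping becomes heavier because the target path has one more edge, and the residual set $C$ of size $k$ must simultaneously accommodate the closing vertex of the blue path and enforce the disjointness of its first and last edges.
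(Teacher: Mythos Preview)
The paper does not prove this theorem at all: it is quoted verbatim from Gy\'arf\'as--Raeisi \cite{subm} and used only as an input (base cases for the inductions in Theorems \ref{main theorem4} and \ref{main theorem1}). There is therefore no proof in the present paper to compare your attempt against.

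Judged on its own merits, your write-up is a plan rather than a proof, and the gap is exactly where you say the ``hard part'' is. Your extremal setup is fine: with a longest red path $e_1e_2$ and $|C|=k$, the edges $\{y\}\cup Z$ with $y\in A\cup B$ and $Z\in\binom{C}{k-1}$ are indeed forced blue. You also correctly diagnose that for $k\ge 4$ any two such edges meet in at least $k-2\ge 2$ vertices of $C$, so no two of them can be consecutive in a loose path. But from that point on you only \emph{announce} that a suitable ``mixing'' edge using several vertices of $A\cup B$ and fewer of $C$ must be blue, without producing it or explaining which maximality argument forces its colour. That is the entire content of the problem: an edge with, say, two vertices in $A$ and $k-2$ in $C$ does not attach to an end of $e_1e_2$ as a third edge of a loose path (it meets $e_1$ in two vertices), so the simple ``otherwise extend the red path'' trick no longer applies, and some genuinely new configuration is needed. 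Part~(b) is not addressed beyond saying the bookkeeping is ``heavier''.

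In short: the framework is reasonable and matches the spirit of the arguments in \cite{subm}, but the decisive step --- constructing the blue $\mathcal{P}^k_3$ (resp.\ $\mathcal{C}^k_3$, $\mathcal{P}^k_4$, $\mathcal{C}^k_4$) once the easy blue edges have been identified --- is asserted, not carried out. If you want to complete this, you should look at the actual case analysis in \cite{subm}, which handles the intersection obstruction by considering the colour of carefully chosen edges that straddle $A$, $B$, and $C$ in specific proportions and branching on each outcome.
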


 \begin{theorem}\label{R(C3,C4)}{\rm \cite{Ramsey numbers of loose cycles in uniform hypergraphs}}
Let $n,k\geq 3$ be  integer numbers. Then
\begin{eqnarray*}R(\mathcal{C}^k_3,\mathcal{C}^k_n)= (k-1)n+1.\end{eqnarray*}
\end{theorem}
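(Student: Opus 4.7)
My plan is to establish only the upper bound $R(\mathcal{C}^k_3,\mathcal{C}^k_n)\le (k-1)n+1$, since, as the authors note, Lemma~1 of \cite{subm} gives the matching lower bound via the standard extremal coloring. I would prove the bound by induction on $n$ with $k$ fixed. The base case $n=3$ is immediate from Theorem~\ref{R(Pk3,Pk3)}(a), which gives $R(\mathcal{C}^k_3,\mathcal{C}^k_3)=3k-2=(k-1)\cdot 3+1$.

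For the inductive step, set $N=(k-1)n+1$ and consider any red-blue coloring of $\mathcal{K}^k_N$ containing no red $\mathcal{C}^k_3$. Since $N>(k-1)(n-1)+1$, the inductive hypothesis supplies a blue copy $\mathcal{C}$ of $\mathcal{C}^k_{n-1}$ with edges $e_1,\ldots,e_{n-1}$ and joining vertices $u_i=e_i\cap e_{i+1}$; let $W_i=e_i\setminus\{u_{i-1},u_i\}$ be the $k-2$ interior vertices of $e_i$. The complement $V':=V(\mathcal{K}^k_N)\setminus V(\mathcal{C})$ has exactly $k$ vertices. To upgrade $\mathcal{C}$ to a blue $\mathcal{C}^k_n$, it suffices to find an index $i$, a vertex $x$, and two blue edges $f_1,f_2\subseteq V'\cup W_i\cup\{u_{i-1},u_i\}$ with $f_1\cap f_2=\{x\}$, $u_{i-1}\in f_1\setminus f_2$, and $u_i\in f_2\setminus f_1$; replacing $e_i$ by $f_1,f_2$ then yields the desired cycle. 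The $2k-3$ ``interior'' vertices of $f_1\cup f_2$ are drawn from the pool $V'\cup W_i$ of size $2k-2$, leaving a single vertex of slack.

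The heart of the argument is to show such a splitting exists for some $i$. My plan is to assume the contrary and derive a red $\mathcal{C}^k_3$. Failure for every index $i$ and every valid split forces many $k$-subsets of $V'\cup W_i\cup\{u_{i-1},u_i\}$ to be red; as $i$ ranges over $\{1,\ldots,n-1\}$, these red edges accumulate around the external vertices. By a pigeonhole argument I would locate a single $x\in V'$ lying in red edges reaching two distinct joining vertices $u_s,u_t$ of $\mathcal{C}$, and then close the configuration through a third red edge to obtain three red edges with pairwise single-vertex intersections at three distinct vertices, forming the forbidden red $\mathcal{C}^k_3$.

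The main obstacle is precisely this last closing step: the hypothesis ``no red $\mathcal{C}^k_3$'' is weak, since many pairs of red edges sharing a single vertex may coexist without producing a triangle, and the third edge must be extracted from the structured red edges uncovered by the failure of the extensions. A dichotomy on the red-degree of a typical $x\in V'$ into joining vertices should make this work: if this red-degree is small, many blue splits are available and the extension succeeds; if it is large, $x$ together with two joining vertices and a supporting red edge supplies the triangle. I anticipate that the $k=3$ case may need a short direct treatment, since the interior pool $|W_i|=1$ is very small, but the same scheme should then carry $k\ge 4$ with comfortable room.
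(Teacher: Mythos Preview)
The paper does not actually prove Theorem~\ref{R(C3,C4)}; it is quoted as a known result from \cite{Ramsey numbers of loose cycles in uniform hypergraphs} and used as a black box in the induction base of Theorem~\ref{main theorem4}. So there is no proof in the present paper to compare against.

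Your outline is a reasonable strategy, and the reduction to ``extend a blue $\mathcal{C}^k_{n-1}$ by splitting one edge using the $k$ external vertices'' is the natural move. The gap is exactly where you locate it yourself: the closing of a red $\mathcal{C}^k_3$. Note that every red edge you produce from a failed split at $e_i$ lies inside $V'\cup W_i\cup\{u_{i-1}\}$ or $V'\cup W_i\cup\{u_i\}$; in particular it contains at most one joining vertex of $\mathcal{C}$. Your proposed triangle has corners $x,u_s,u_t$, so the third edge must contain both $u_s$ and $u_t$ and avoid $x$ while meeting each of the first two red edges in exactly one vertex. None of the red edges handed to you by failed splits have this form, and you give no mechanism for manufacturing such an edge. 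The vague ``red-degree dichotomy'' does not address this: large red-degree of $x$ into joining vertices only gives you many red edges through $x$, not the transversal edge through $u_s$ and $u_t$. Until you specify where that third edge comes from (for instance, by also exploiting failed splits at $e_s$ and $e_t$ that happen to share interior vertices, or by arguing directly about an auxiliary edge inside $V'\cup\{u_s,u_t\}$), the argument is incomplete rather than merely informal. The $k=3$ case, which you flag separately, is not a side issue: with $|W_i|=1$ the slack disappears and the counting you allude to becomes tightest precisely there.
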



\vspace{0.5 cm}

In order to state our main results we need some definitions. Let
$\mathcal{H}$ be a 2-edge colored complete $4$-uniform hypergraph,
$\mathcal{P}$ be a loose path in $\mathcal{H}$ and $W$ be a set of
vertices with $W\cap V(\mathcal{P})=\emptyset$. By a {\it
$\varpi_S$-configuration}, we mean a copy of $\mathcal{P}^4_2$
with edges $$\{x,a_1,a_2,a_3\},
\{a_3,a_4,a_5,y\},$$ so that $\{x,y\}\subseteq W$ and  $S=\{a_j : 1\leq j
\leq 5\}\subseteq (e_{i-1}\setminus \{f_{\mathcal{P},e_{i-1}}\})\cup e_{i}
\cup e_{i+1}$ is a set of unordered vertices of
 $3$
 consecutive edges of
$\mathcal{P}$ with  $|S\cap (e_{i-1}\setminus \{f_{\mathcal{P},e_{i-1}}\})|\leq 1.$
 The vertices $x$ and $y$ are called {\it
the end vertices} of this configuration. A
$\varpi_{S}$-configuration, $S\subseteq (e_{i-1}\setminus \{f_{\mathcal{P},e_{i-1}}\})\cup e_{i}
\cup e_{i+1}$,
 is {\it good} if at least one of the vertices of
$e_{i+1}\setminus e_{i}$
is not in $S$. We say that a monochromatic path
$\mathcal{P}=e_1e_2\ldots e_n$ is {\it maximal with respect to}
(w.r.t. for short) $W\subseteq V(\mathcal{H})\setminus
V(\mathcal{P})$ if there is no $W'\subseteq W$ so that for some
$1\leq r\leq n$ and $1\leq i\leq n-r+1,$
 \begin{eqnarray*}\mathcal{P}'=e_1e_2\ldots e_{i-1}e'_ie'_{i+1}\ldots e'_{i+r}e_{i+r}\ldots e_n,\end{eqnarray*} is a monochromatic path with $n+1$ edges and
 the following properties:

 \begin{itemize}
\item[(i)] $V(\mathcal{P}')=V(\mathcal{P})\cup W'$,
 \item[(ii)] if $i=1$, then
$f_{\mathcal{P}',e'_i}=f_{\mathcal{P},e_i}$,
\item[(iii)] if
 $i+r-1=n$, then
$l_{\mathcal{P}',e'_{i+r}}=l_{\mathcal{P},e_n}$.
\end{itemize}
 Clearly, if $\mathcal{P}$ is maximal w.r.t. $W$, then it is maximal w.r.t. every
$W'\subseteq W$ and also every loose path $\mathcal{P}'$ which is a sub-hypergraph of $\mathcal{P}$ is again maximal w.r.t. $W$.

   \noindent   We use these definitions to deduce the following  essential lemma.

\bigskip
\begin{lemma}\label{spacial configuration1}
Assume that  $\mathcal{H}=\mathcal{K}^4_{n}$  is
$2$-edge colored red and blue. Let $\mathcal{P}\subseteq
\mathcal{H}_{\rm red}$ be a maximal path w.r.t. $W,$ where
$W\subseteq V(\mathcal{H})\setminus V(\mathcal{P})$ and  $|W|\geq 4$.
For every two consecutive edges $e_1$ and $e_2$ of
$\mathcal{P}$ there is a
 good $\varpi_S$-configuration, say
$C=fg$, in $\mathcal{H}_{\rm blue}$ with  end vertices $x\in f$ and $y\in g$ in $W$ and
$S\subseteq e_1\cup e_{2}$. Moreover, there are two subsets $W_1\subseteq W$ and $W_2\subseteq W$ with $|W_1|\geq |W|-2$ and $|W_2|\geq |W|-3$ so that for every distinct vertices $x'\in W_1$ and $y'\in W_2$, the path $C'=\Big((f\setminus\{x\})\cup\{x'\}\Big)\Big((g\setminus\{y\})\cup\{y'\}\Big)$ is also a good $\varpi_S$-configuration in $\mathcal{H}_{\rm blue}$ with  end vertices $x'$ and $y'$ in $W.$

\end{lemma}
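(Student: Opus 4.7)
I would prove the lemma by contradiction with the maximality of $\mathcal{P}$ w.r.t.\ $W$. Write $e_1=\{v_1,v_2,v_3,v_4\}$ with $l_{\mathcal{P},e_1}=v_4$ and $e_2=\{v_4,v_5,v_6,v_7\}$ with $l_{\mathcal{P},e_2}=v_7$. A natural candidate for $S$ is $\{v_2,v_3,v_4,v_5,v_6\}$ with shared vertex $a_3=v_4$, $\{a_1,a_2\}=\{v_2,v_3\}$ and $\{a_4,a_5\}=\{v_5,v_6\}$; since $v_7\in e_2\setminus e_1$ is omitted from $S$, any $\varpi_S$-configuration built on this $S$ is automatically good. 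Symmetric variants (shifting $a_3$ to $v_3$ or $v_5$, or replacing $v_5$ or $v_6$ by $v_7$) will be needed to handle boundary sub-cases.

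For the existence claim, the plan is to produce distinct $x,y\in W$ so that both $\{x,v_2,v_3,v_4\}$ and $\{v_4,v_5,v_6,y\}$ are blue. Suppose no such pair exists; then for every pair $x,y\in W$ at least one of these candidate edges is red, and pigeonholing on $|W|\ge 4$ forces many red edges of one of the two forms $\{w,v_2,v_3,v_4\}$ or $\{v_4,v_5,v_6,w\}$ with $w\in W$. From such red edges, together with the freedom to draw two or three further vertices from $W$, one assembles a red loose $2$- or $3$-path that replaces a short segment of $\mathcal{P}$ containing $e_1$ and/or $e_2$: the new segment uses all vertices of the replaced edges (to satisfy (i)), meets the adjacent edges $e_{i-1}$ and $e_{i+2}$ in a single vertex each, and respects (ii)--(iii) when $e_1$ or $e_2$ is at an end of $\mathcal{P}$. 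This yields a red path with $\|\mathcal{P}\|+1$ edges whose vertex set is $V(\mathcal{P})\cup W'$ for some $W'\subseteq W$, contradicting maximality.

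For the moreover part, set $W_1=\{w\in W:\{w,a_1,a_2,a_3\}\in\mathcal{H}_{\rm blue}\}$ and $W_2=\{w\in W:\{a_3,a_4,a_5,w\}\in\mathcal{H}_{\rm blue}\}$; since $f,g$ are blue we have $x\in W_1$ and $y\in W_2$. If $|W_1|<|W|-2$, then at least three distinct $w_1,w_2,w_3\in W$ give red edges $\{w_j,a_1,a_2,a_3\}$. The plan is to combine two of these red edges with the omitted vertex $v_1\in e_1\setminus S$ and a third vertex from $W$ to build a red loose $2$-path replacing $e_1$ in $\mathcal{P}$, contradicting maximality. The argument for $W_2$ is parallel; the weaker bound $|W|-3$ reflects an extra degree of freedom on the $e_2$-side coming from the vertex of $e_2\setminus e_1$ that is omitted from $S$ (guaranteed by goodness), which can serve as the shared vertex of the replacement $2$-path, so one more bad vertex must accumulate before maximality can be contradicted.

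The main obstacle will be the combinatorial bookkeeping: verifying that every candidate replacement truly forms a loose path meeting the adjacent edges of $\mathcal{P}$ in exactly one prescribed vertex, and choosing which vertex of $e_1\cup e_2$ plays the shared role in the new $2$- or $3$-path. The boundary cases, when $e_1$ is the first or $e_2$ the last edge of $\mathcal{P}$, must be treated separately using variant choices of $(a_1,a_2,a_4,a_5)$ that keep $f_{\mathcal{P},e_1}$ or $l_{\mathcal{P},e_n}$ in place, but they use the same template of pigeonholing on $W$ and invoking maximality.
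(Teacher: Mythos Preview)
Your plan has a real gap in the step where you try to contradict maximality. You fix $S=\{v_2,v_3,v_4,v_5,v_6\}$ in advance and argue that if at least three vertices $w_1,w_2,w_3\in W$ give red edges $\{w_j,v_2,v_3,v_4\}$, then two of them together with $v_1$ and another $W$-vertex can be assembled into a red loose $2$-path replacing $e_1$. But the edges $\{w_j,v_2,v_3,v_4\}$ all share the same triple $\{v_2,v_3,v_4\}$; no two of them meet in a single vertex, so they cannot be chained into a loose path. To replace $e_1$ by a $2$-path from $v_1$ to $v_4$ you would need a red edge of the form $\{v_1,w_a,w_b,w_j\}$, and nothing in your hypotheses forces such an edge to be red. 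The same obstruction already hits the existence claim: having $\{w,v_2,v_3,v_4\}$ (or $\{v_4,v_5,v_6,w\}$) red for every $w\in W$ does not by itself contradict maximality of $\mathcal{P}$, since none of these edges, singly or in pairs, constitutes a replacement segment that starts at $v_1$ and ends at $v_7$ while covering all of $e_1\cup e_2$.

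The paper avoids this by \emph{not} fixing $S$ in advance. It writes down three edges
\[
f_1=\{v_1,x_1,v_2,v_5\},\quad f_2=\{v_2,x_2,v_3,v_6\},\quad f_3=\{v_3,x_3,v_4,v_7\}
\]
with $x_1,x_2,x_3\in W$ distinct; these form a loose $3$-path from $v_1$ to $v_7$ covering all of $e_1\cup e_2$, so maximality forces some $f_j$ to be blue. The key device is to choose the $3$-tuple $(x_1,x_2,x_3)$ so as to \emph{minimise} the number of blue edges among $f_1,f_2,f_3$: then swapping $x_j$ for any $x'\in W\setminus\{x_1,x_2,x_3\}$ cannot decrease this count, so $(f_j\setminus\{x_j\})\cup\{x'\}$ is blue for every such $x'$, yielding $W_1$ with $|W_1|=|W|-2$. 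A second round with a tailored triple $g_1,g_2,g_3$ (depending on $j$, and chosen so that each $g_i$ meets $f_j\setminus\{x_j\}$ in exactly one vertex) produces the second edge $g$ and the set $W_2$. Thus the missing idea is to set up \emph{several} candidate edges that jointly form a replacement path and to use a minimisation over the $W$-labels, rather than trying to force one pre-chosen $5$-set $S$ to work.
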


\begin{proof}{ Let
$$e_1=\{v_1,v_2,v_3,v_4\},  e_2=\{v_4,v_5,v_6,v_7\}.$$

\noindent Among  different choices of $3$ distinct vertices of $W,$ choose a $3$-tuple
$X=(x_1,x_2,x_3)$ so that $E_X$ has the  minimum
number of  blue edges, where $E_X=\{f_1,f_2,f_3\}$ and
\begin{eqnarray*}
&&f_{1}=\{v_1,x_1,v_2,v_5\},\\
&&f_{2}=\{v_2,x_2,v_3,v_6\},\\
&&f_{{3}}=\{v_3,x_{3},v_4,v_7\}.
\end{eqnarray*}
Note that for $1\leq i \leq 3,$ we have $|f_i\cap(e_{2}\setminus\{f_{\mathcal{P},e_{2}}\})|=1.$
 Since $\mathcal{P}$ is a maximal path w.r.t.   $W,$ there is $1\leq j\leq
3$ so that  the edge $f_{j}$ is blue. Otherwise, replacing $e_1e_2$ by $f_1f_2f_3$ in $\mathcal{P}$ yields a red path $\mathcal{P}'$ with $n+1$ edges; this is a contradiction.
 Let $W_1=(W\setminus\{x_1,x_2,x_{3}\})\cup\{x_j\}$.
For each vertex
$x\in W_1$  the edge
$f_x=(f_j\setminus\{x_j\})\cup\{x\}$ is blue. Otherwise, the number of blue edges in $E_Y$
 is less than this number for $E_X$, where $Y$ is obtained from $X$  by replacing  $x_j$ to $x$. This is a contradiction.\\

Now we choose $h_1,h_2,h_3$ as follows. If $j=1,$ then set
\begin{eqnarray*}
h_1=\{v_1,v_6,v_3\},h_2=\{v_3,v_2,v_4\},h_3=\{v_4,v_5,v_7\}.
\end{eqnarray*}
If $j=2,$ then set
\begin{eqnarray*}
h_1=\{v_1,v_2,v_5\},h_2=\{v_5,v_6,v_4\},h_3=\{v_4,v_3,v_7\}.
\end{eqnarray*}
If $j=3,$ then set
\begin{eqnarray*}
h_1=\{v_1,v_3,v_5\},h_2=\{v_5,v_4,v_2\},h_3=\{v_2,v_6,v_7\}.
\end{eqnarray*}
Note that in each the above cases, for $1\leq i \leq 3,$ we have  $|h_i\cap (f_j\setminus\{x_j\})|=1$ and $|h_i\cap(e_2\setminus(f_j\cup\{v_4\}))|\leq 1$.
Let $Y=(y_1,y_2,y_{3})$ be a $3$-tuple of distinct vertices of $W\setminus\{x_j\}$ with minimum number of blue edges in $F_{Y}$,  where $F_Y=\{g_1,g_2,g_3\}$ and $g_i=h_i\cup\{y_i\}$.
 Again since $\mathcal{P}$ is maximal w.r.t. $W$, for some $1\leq \ell\leq 3$ the edge
$g_{\ell}$ is blue and also, for each vertex $y_a\in W_2=
(W\setminus\{x_j,y_{1},y_{2},y_3\})\cup \{y_\ell\}$ the edge $g_a=(g_{\ell}\setminus\{y_{\ell}\})\cup\{y_a\}$ is blue. Therefore, for every $x'\in W_1$ and $y'\in W_2$,
 we have $C=fg$ which is our desired configuration, where $f=(f_j\setminus\{x_j\})\cup\{x'\}$ and $g=(g_{\ell}\setminus\{y_{\ell}\})\cup\{y'\}.$
  Since $|W_1|=|W|-2$, each vertex of $W,$ with the exception of at most $2,$ can be considered as an end vertex of $C.$
  Note that this
configuration contains at most two vertices of $e_2\setminus
e_1$.}\end{proof}

By an argument similar to the proof of Lemma
 \ref{spacial configuration1}, we have the following general result.

\bigskip
\begin{lemma}\label{spacial configuration2}
Assume that   $\mathcal{H}=\mathcal{K}^4_{n},$  is
$2$-edge colored red and blue. Let $\mathcal{P}\subseteq \mathcal{H}_{\rm red}$ be a maximal path w.r.t. $W,$
where $W\subseteq V(\mathcal{H})\setminus V(\mathcal{P})$ and  $|W|\geq 4$.
Let $A_1=\{f_{\mathcal{P},e_{1}}\}=\{v_1\}$ and
$A_i=V(e_{i-1})\setminus\{f_{\mathcal{P},e_{i-1}}\}$ for $i>1$.
 Then for every two  consecutive edges $e_i$ and $e_{i+1}$
of $\mathcal{P}$ and for each $u\in A_i$ there  is a
 good $\varpi_S$-configuration, say
$C=fg$, in $\mathcal{H}_{\rm blue}$ with  end vertices  $x\in f$ and $y\in g$  in $W$ and
 \begin{eqnarray*}S\subseteq \Big((e_i\setminus
\{f_{\mathcal{P},e_{i}}\})\cup \{u\}\Big)\cup \Big(e_{i+1}\setminus \{v\}\Big),\end{eqnarray*} for
some $v\in A_{i+2}$. Moreover, there are two subsets $W_1\subseteq W$ and $W_2\subseteq W$ with $|W_1|\geq |W|-2$ and $|W_2|\geq |W|-3$ so that for every distinct vertices $x'\in W_1$ and $y'\in W_2$, the path $C'=\Big((f\setminus\{x\})\cup\{x'\}\Big)\Big((g\setminus\{y\})\cup\{y'\}\Big)$ is also a good $\varpi_S$-configuration in $\mathcal{H}_{\rm blue}$ with  end vertices $x'$ and $y'$ in $W.$


\end{lemma}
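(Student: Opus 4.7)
The plan is to emulate the proof of Lemma~\ref{spacial configuration1} almost verbatim, with two changes: (i) use the given $u\in A_i$ in place of the former $v_1=f_{\mathcal{P},e_1}$, and (ii) replace the internal segment $e_ie_{i+1}$ of $\mathcal{P}$ rather than the initial segment $e_1e_2$.

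For convenience write $e_i=\{a_1,a_2,a_3,a_4\}$ with $a_1=f_{\mathcal{P},e_i}$ and $a_4=l_{\mathcal{P},e_i}$, and $e_{i+1}=\{a_4,a_5,a_6,a_7\}$; note that $u\in e_{i-1}$ (with the convention $u=a_1$ when $i=1$). Choose a triple $X=(x_1,x_2,x_3)$ of distinct vertices of $W$ minimizing the number of blue edges among
\begin{align*}
f_1&=\{u,x_1,a_2,a_5\},\\
f_2&=\{a_2,x_2,a_3,a_6\},\\
f_3&=\{a_3,x_3,a_4,a_7\}.
\end{align*}
Since $e_{i-1}\cap(e_i\cup e_{i+1})=\{a_1\}$ and $u\in e_{i-1}$, one has $e_{i-1}\cap f_1=\{u\}$; thus if all three $f_j$ were red the sequence $e_1\ldots e_{i-1}f_1f_2f_3e_{i+2}\ldots e_n$ would be a red loose path with $n+1$ edges and vertex set $V(\mathcal{P})\cup\{x_1,x_2,x_3\}$, contradicting the maximality of $\mathcal{P}$ w.r.t.\ $W$. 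Hence some $f_j$ is blue, and by the minimality of $X$ the edge $(f_j\setminus\{x_j\})\cup\{x'\}$ is blue for every $x'\in W_1:=(W\setminus\{x_1,x_2,x_3\})\cup\{x_j\}$, giving $|W_1|\ge|W|-2$.

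For the second edge of the configuration I would reuse the triples $h_1,h_2,h_3$ from the proof of Lemma~\ref{spacial configuration1} corresponding to the index $j$, after replacing every occurrence of $a_1$ by $u$; for instance when $j=2$ this gives $h_1=\{u,a_2,a_5\}$, $h_2=\{a_5,a_6,a_4\}$, $h_3=\{a_4,a_3,a_7\}$. With $g_i=h_i\cup\{y_i\}$ for a triple $Y=(y_1,y_2,y_3)$ of distinct vertices of $W\setminus\{x_j\}$ minimizing the number of blue edges among $\{g_1,g_2,g_3\}$, one checks in each of $j=1,2,3$ that $g_1g_2g_3$ is a loose path with $g_1\cap e_{i-1}=\{u\}$ and $g_3\cap e_{i+2}=\{a_7\}$. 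If all $g_i$ were red, extending $\mathcal{P}$ by $e_1\ldots e_{i-1}g_1g_2g_3e_{i+2}\ldots e_n$ again contradicts maximality, so some $g_\ell$ is blue and the minimality of $Y$ yields $W_2:=(W\setminus\{x_j,y_1,y_2,y_3\})\cup\{y_\ell\}$ with $|W_2|\ge|W|-3$ and the claimed stability.

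It remains to check, for each $(j,\ell)\in\{1,2,3\}^2$, that the resulting configuration $fg$ is good and that $S\subseteq\bigl((e_i\setminus\{a_1\})\cup\{u\}\bigr)\cup(e_{i+1}\setminus\{v\})$ for some $v\in A_{i+2}=\{a_5,a_6,a_7\}$. Since $S\subseteq\{u,a_2,a_3,a_4,a_5,a_6,a_7\}$ has exactly $5$ elements, direct inspection of the nine combinations shows that $|S\cap\{a_5,a_6,a_7\}|\le 2$ in every case, so one vertex of $e_{i+1}\setminus e_i$ always lies outside $S$, simultaneously witnessing goodness and supplying the required $v$. The main obstacle is precisely this case-by-case bookkeeping in the case $u\neq a_1$: one must confirm that substituting $u$ for $a_1$ in the $h_i$'s preserves the pairwise intersections of consecutive $g_s$ and the single-vertex link $g_1\cap e_{i-1}=\{u\}$, which follows at once from $u\in e_{i-1}\setminus e_i$ and the loose-path structure $e_{i-1}\cap(e_i\cup e_{i+1})=\{a_1\}$.
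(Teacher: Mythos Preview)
Your proposal is correct and is exactly the argument the paper has in mind: the paper does not give a separate proof of Lemma~\ref{spacial configuration2} but simply states that it follows ``by an argument similar to the proof of Lemma~\ref{spacial configuration1},'' and your adaptation---replacing $v_1=f_{\mathcal{P},e_1}$ by the given $u\in A_i$ and embedding $f_1f_2f_3$ (resp.\ $g_1g_2g_3$) between $e_{i-1}$ and $e_{i+2}$---is precisely that argument. The only additional care needed, which you identify and handle correctly, is that when $u\neq a_1$ one must verify $e_{i-1}\cap f_1=\{u\}$ (and likewise for $g_1$) so that maximality still applies; this follows since $u\in A_i$ forces $u\neq f_{\mathcal{P},e_{i-1}}$ and the loose-path structure gives $e_{i-1}\cap(e_i\cup e_{i+1})=\{a_1\}$.
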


The following result is an immediate corollary of Lemma
 \ref{spacial configuration2}.
\begin{corollary}\label{there is a Pl}
Let   $\mathcal{H}=\mathcal{K}_l^4$  be two edge
colored red and blue. Also let $\mathcal{P}=e_1e_2\ldots e_n,$ $n\geq
2,$ be a maximal red path w.r.t. $W,$ where $W\subseteq
V(\mathcal{H})\setminus V(\mathcal{P})$ and  $|W|\geq 4$. Then for
some $r\geq 0$ and $W'\subseteq W$ there are  two disjoint blue
paths $\mathcal{Q}$ and $\mathcal{Q}',$ with $\|\mathcal{Q}\|\geq 2$ and
\begin{eqnarray*}
\|\mathcal{Q}\cup \mathcal{Q}'\|=n-r=\left\lbrace
\begin{array}{ll}
2(|W'|-2)  &\mbox{if} \ \|\mathcal{Q}'\|\neq 0,\vspace{.5 cm}\\
2(|W'|-1) & \mbox{if}\ \|\mathcal{Q}'\|=0,
\end{array}
\right.\vspace{.2 cm}
\end{eqnarray*}
 between $W'$ and
$\overline{\mathcal{P}}=e_1e_2\ldots e_{n-r}$ so that $e\cap W'$
is actually the end vertex of $e$ for each edge $e\in
\mathcal{Q}\cup \mathcal{Q}'$ and at least one of the vertices of
$e_{n-r}\setminus e_{n-r-1}$ is not in $V(\mathcal{Q})\cup
V(\mathcal{Q}')$. Moreover, if $\|\mathcal{Q}'\|=0$ then either
$x=|W\setminus W'|\in\{1,2\}$ or $x\geq 3$ and $0\leq r \leq
1$. Otherwise, either $x=|W\setminus W'|=0$ or $x\geq 1$ and
$0\leq r \leq 1$.

\end{corollary}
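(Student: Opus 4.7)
The proof proceeds by iteratively applying Lemma \ref{spacial configuration2} to pairs of consecutive edges of $\mathcal{P}$, stitching the resulting good blue $\varpi_S$-configurations into one or two blue loose paths by reusing vertices in $W$ as shared end vertices.

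For the base step, apply Lemma \ref{spacial configuration1} to the pair $(e_1,e_2)$ to produce a good blue $\varpi_S$-configuration $C_1=f_1g_1$ with end vertices $x_1,y_1\in W$. Initialize $\mathcal{Q}:=C_1$ and $W':=\{x_1,y_1\}$. The good property of $C_1$ guarantees that some vertex of $e_3\setminus e_2$ remains outside $V(C_1)$, and this vertex will be supplied as the parameter $u\in A_3$ in the next invocation of Lemma \ref{spacial configuration2}.

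For the iteration at step $j\geq 2$, apply Lemma \ref{spacial configuration2} to $(e_{2j-1},e_{2j})$ with the vertex $u$ inherited from the previous configuration, obtaining a good $\varpi_S$-configuration $C_j=f_jg_j$. By the moreover clause, the first end vertex of $C_j$ may be chosen in some $W_1\subseteq W$ with $|W_1|\geq |W|-2$, and the second end vertex in some $W_2\subseteq W$ with $|W_2|\geq |W|-3$. I try to set $x_j:=y_{j-1}$, so that $f_j$ and $g_{j-1}$ share precisely $y_{j-1}=x_j$ and $\mathcal{Q}$ extends by $C_j$; this succeeds whenever $y_{j-1}\in W_1$, which fails for at most two exceptional choices. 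On success, choose a fresh $y_j\in W_2\setminus W'$ and add $y_j$ to $W'$. On failure (i.e., when $y_{j-1}$ is exceptional), either open $\mathcal{Q}':=C_j$ with new end vertices $x_j,y_j\in W\setminus W'$ if $\mathcal{Q}'$ is still empty, or terminate. Also terminate when $|W\setminus W'|$ becomes too small for Lemma \ref{spacial configuration2} to apply, or when the pairs of $\mathcal{P}$ are all consumed (yielding $r=0$ if $n$ is even and $r=1$ if $n$ is odd, the unmatched edge $e_n$ being absorbed into the tail).

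For the verification, after $k$ successfully-built configurations we have $\|\mathcal{Q}\cup\mathcal{Q}'\|=2k=n-r$. Each chained step adds exactly one vertex to $W'$; the initial step and the opening of $\mathcal{Q}'$ each add two. In the one-path case this gives $|W'|=k+1$, hence $\|\mathcal{Q}\cup\mathcal{Q}'\|=2(|W'|-1)$; in the two-path case $|W'|=k+2$, hence $\|\mathcal{Q}\cup\mathcal{Q}'\|=2(|W'|-2)$, matching the stated formula. Moreover, the last built configuration is good, so some vertex of $e_{n-r}\setminus e_{n-r-1}$ remains outside $V(\mathcal{Q})\cup V(\mathcal{Q}')$. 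The main obstacle will be the moreover dichotomy: one must show that whenever $x=|W\setminus W'|\geq 3$ in the one-path case (respectively $x\geq 1$ in the two-path case), the reservoir $W\setminus W'$ still has enough slack for the at most $3$ exceptional positions forbidden by $W_2$ and the at most $2$ forbidden by $W_1$, so Lemma \ref{spacial configuration2} could actually have been applied once more; the only reason the construction would have halted is exhaustion of $\mathcal{P}$, forcing $r\leq 1$. Carrying out this accounting case by case, and making a small re-choice of $y_{j-1}$ using the freedom in $W_2$ when the naive tail vertex is exceptional, completes the argument.
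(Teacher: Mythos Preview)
Your approach is essentially the paper's: iterate over successive pairs $(e_{2j-1},e_{2j})$, apply Lemma~\ref{spacial configuration2} at each step with a carried-over vertex $u$, use the freedom in the sets $W_1,W_2$ to chain the resulting configurations into one or two blue paths, and terminate when the reservoir of available end vertices drops too low or at most one edge of $\mathcal{P}$ remains. The edge/vertex counting giving $\|\mathcal{Q}\cup\mathcal{Q}'\|=2(|W'|-1)$ or $2(|W'|-2)$, and the dichotomy for $x=|W\setminus W'|$ versus $r$, match the paper's argument.

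One concrete slip: the good property of $C_1$ leaves a vertex of $e_2\setminus e_1$ (not $e_3\setminus e_2$) outside $V(C_1)$. It is this vertex $u_1\in e_2\setminus e_1\subseteq A_3=V(e_2)\setminus\{f_{\mathcal{P},e_2}\}$ that is fed as the parameter $u$ into the next call of Lemma~\ref{spacial configuration2} on $(e_3,e_4)$. A vertex of $e_3\setminus e_2$ does not lie in $A_3$, so your stated choice of $u$ is inadmissible as written; the same correction applies at every subsequent step. With this fix the argument goes through exactly as in the paper.
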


\begin{proof}{
Let $\mathcal{P}=e_1e_2\ldots e_{n}$ be a maximal  red path w.r.t.
$W,$ $W\subseteq V(\mathcal{H})\setminus V(\mathcal{P})$, and
\begin{eqnarray*} e_i=\{v_{(i-1)(k-1)+1},v_{(i-1)(k-1)+2},\ldots,v_{i(k-1)+1}\}, \hspace{1 cm} i=1,2,\ldots,n,\end{eqnarray*}
are the edges of $\mathcal{P}$.\\\\
{\bf Step 1:} Set  $\mathcal{P}_1=\mathcal{P}$, $W_1=W$ and
$\overline{\mathcal{P}}_1=\mathcal{P}'_1=e_{1}e_{2}$. Since $\mathcal{P}$ is
maximal w.r.t. $W_1$, using Lemma
 \ref{spacial configuration1} there is a good $\varpi_S$-configuration, say
 $\mathcal{Q}_1=f_1g_1,$ in $\mathcal{H}_{\rm blue}$  with end vertices $x\in f_1$ and $y\in g_1$ in $W_1$
 so that $S\subseteq \mathcal{P}'_1$  and
  $\mathcal{Q}_1$ does not contain  a vertex of
 $e_2\setminus e_1,$ say $u_1$. Set  $X_1=|W\setminus V(\mathcal{Q}_1)|$,
$\mathcal{P}_2=\mathcal{P}_1\setminus \overline{\mathcal{P}}_1=e_3e_4\ldots e_n$ and
$W_2=W.$
 If  $|W_2|=4$ or $\|\mathcal{P}_2\|\leq 1$, then $\mathcal{Q}=\mathcal{Q}_1$ is a blue path between $W'=W_1\cap V(\mathcal{Q}_1)$
 and $\overline{\mathcal{P}}=\overline{\mathcal{P}}_1$ with desired properties. Otherwise, go to Step
 2.\\\\

\noindent {\bf Step 2:}
 Clearly  $|W_2|\geq 5$ and $\|\mathcal{P}_2\|\geq 2.$
 Set $\overline{\mathcal{P}}_2=e_3e_4$ and
 $\mathcal{P}'_2=\Big((e_{3}\setminus \{f_{\mathcal{P},e_{3}}\})\cup\{u_1\}\Big)e_4$.
 Since $\mathcal{P}$ is maximal w.r.t. $W_2$,
 using Lemma \ref{spacial configuration2} there is  a good $\varpi_S$-configuration, say
 $\mathcal{Q}_2=f_2g_2,$ in $\mathcal{H}_{\rm blue}$ with end vertices $x\in f_2$ and $y\in g_2$ in $W_2$
 such that $S\subseteq \mathcal{P}'_2$  and  $\mathcal{Q}_2$ does not contain a vertex of $e_4\setminus e_3,$ say $u_2.$ By Lemma \ref{spacial configuration2}, there are two subsets $W_{21}\subseteq W_2$ and $W_{22}\subseteq W_2$ with $|W_{21}|\geq |W_2|-2$ and $|W_{22}|\geq |W_2|-3$ so that for every distinct vertices $x'\in W_{21}$ and $y'\in W_{22}$, the path $\mathcal{Q}'_2=\Big((f_2\setminus\{x\})\cup\{x'\}\Big)\Big((g_2\setminus\{y\})\cup\{y'\}\Big)$ is also a good $\varpi_S$-configuration in $\mathcal{H}_{\rm blue}$ with  end vertices $x'$ and $y'$ in $W_2.$
Therefore, we may assume that   $\bigcup_{i=1}^{2}\mathcal{Q}_i$ is either  a blue path or the union of  two disjoint blue
paths.
 Set $X_2=|W\setminus \bigcup_{i=1}^{2} V(\mathcal{Q}_i)|$ and
$\mathcal{P}_3=\mathcal{P}_{2}\setminus
\overline{\mathcal{P}}_{2}=e_5e_6\ldots e_n$. If
$\bigcup_{i=1}^{2}\mathcal{Q}_i$ is a blue path $\mathcal{Q}$  with end vertices $x_{2}$ and $y_{2}$,
then set
\begin{eqnarray*}
W_3=\Big(W_{2}\setminus
V(\mathcal{Q})\Big)\cup\{x_{2},y_{2}\}.
\end{eqnarray*}
In this case, clearly $|W_3|= |W_2|-1$.
Otherwise, $\bigcup_{i=1}^{2}\mathcal{Q}_i$ is the union of  two disjoint blue
paths $\mathcal{Q}$ and $\mathcal{Q}'$ with end vertices $x_{2},y_{2}$ and $x'_{2},y'_{2}$ in $W_2$, respectively. In this case,  set
\begin{eqnarray*}
W_3=\Big(W_{2}\setminus
V(\mathcal{Q}\cup\mathcal{Q}')\Big)\cup\{x_{2},y_{2},x'_{2},y'_{2}\}.
\end{eqnarray*}
Clearly $|W_3|=|W_2|$. If  $|W_3|\leq 4$ or $\|\mathcal{P}_3\|\leq 1$,
 then   $\bigcup_{i=1}^{2}\mathcal{Q}_i=\mathcal{Q}$ and  $\emptyset$  or $\mathcal{Q}$ and $\mathcal{Q}'$ (in the case $\bigcup_{i=1}^{2}\mathcal{Q}_i=\mathcal{Q}\cup\mathcal{Q}'$) are the paths between $W'=W\cap \bigcup_{i=1}^{2} V(\mathcal{Q}_i)$ and $\overline{\mathcal{P}}=\overline{\mathcal{P}}_1\cup \overline{\mathcal{P}}_2$
  with desired properties. Otherwise, go to Step $3$.\\\\\\

\noindent{\bf Step $\ell$ ($\ell>2$):}
Clearly  $|W_{\ell}|\geq 5$ and $\|\mathcal{P}_{\ell}\|\geq 2.$ Set
\begin{eqnarray*}
\hspace{-0.7 cm}&&\overline{\mathcal{P}}_{\ell}=e_{2{\ell}-1}e_{2{\ell}},\\
\hspace{-0.7 cm}&&\mathcal{P}'_{\ell}=\Big((e_{2{\ell}-1}\setminus\{f_{\mathcal{P},e_{2{\ell}-1}}\})\cup
\{u_{{\ell}-1}\}\Big)e_{2{\ell}}.
\end{eqnarray*}
    Since $\mathcal{P}$
   is maximal w.r.t. $W_{\ell}$,
 using Lemma \ref{spacial configuration2} there is a good $\varpi_S$-configuration, say
 $\mathcal{Q}_{\ell}=f_{\ell}g_{\ell},$ in $\mathcal{H}_{\rm blue}$ with end vertices $x\in f_{\ell}$ and $y\in g_{\ell}$ in $W_{\ell}$
  such
 that   $\mathcal{Q}_{\ell}$ does not contain a vertex of  $e_{2{\ell}}\setminus e_{2{\ell}-1},$ say $u_{\ell}$.
 By Lemma \ref{spacial configuration2}, there are two subsets $W_{{\ell}1}\subseteq W_{\ell}$ and $W_{{\ell}2}\subseteq W_{\ell}$ with $|W_{{\ell}1}|\geq |W_{\ell}|-2$ and $|W_{{\ell}2}|\geq |W_{\ell}|-3$ so that for every distinct vertices $x'\in W_{{\ell}1}$ and $y'\in W_{{\ell}2}$, the path $\mathcal{Q}'_{\ell}=\Big((f_{\ell}\setminus\{x\})\cup\{x'\}\Big)\Big((g_{\ell}\setminus\{y\})\cup\{y'\}\Big)$ is also a good $\varpi_S$-configuration in $\mathcal{H}_{\rm blue}$ with  end vertices $x'$ and $y'$ in $W_{\ell}.$ Therefore, we may assume that
 either
  $\bigcup_{i=1}^{{\ell}}\mathcal{Q}_i$ is  a blue path $\mathcal{Q}$  with end
 vertices  in $W_{\ell}$  or we have  two disjoint blue paths $\mathcal{Q}$ and $\mathcal{Q}'$ with end vertices  in $W_{\ell}$   so that $\mathcal{Q}\cup \mathcal{Q}'=\bigcup_{i=1}^{{\ell}}\mathcal{Q}_i$.

\noindent Set $X_{\ell}=|W\setminus \bigcup_{i=1}^{{\ell}}V(\mathcal{Q}_i)|$ and
$\mathcal{P}_{{\ell}+1}=\mathcal{P}_{{\ell}}\setminus
\overline{\mathcal{P}}_{{\ell}}=e_{2{\ell}+1}e_{2{\ell}+2}\ldots e_n$. If
$\bigcup_{i=1}^{{\ell}}\mathcal{Q}_i$ is a blue path $\mathcal{Q}$ with end vertices $x_{{\ell}}$ and $y_{{\ell}}$,
then set
\begin{eqnarray*}
W_{{\ell}+1}=\Big(W_{{\ell}}\setminus
V(\mathcal{Q})\Big)\cup\{x_{{\ell}},y_{{\ell}}\}.
\end{eqnarray*}
Note that in this case, $|W_{{\ell}}|-2\leq |W_{{\ell}+1}|\leq |W_{{\ell}}|-1$.
Otherwise, $\bigcup_{i=1}^{{\ell}}\mathcal{Q}_i$ is the union of  two disjoint blue
paths $\mathcal{Q}$ and $\mathcal{Q}'$ with end vertices $x_{{\ell}},y_{{\ell}}$ and $x'_{{\ell}},y'_{{\ell}}$, respectively. In this case,  set
\begin{eqnarray*}
W_{{\ell}+1}=\Big(W_{{\ell}}\setminus
V(\mathcal{Q}\cup\mathcal{Q}')\Big)\cup\{x_{{\ell}},y_{{\ell}},x'_{{\ell}},y'_{{\ell}}\}.
\end{eqnarray*}
Clearly, $|W_{{\ell}}|-1\leq |W_{{\ell}+1}|\leq |W_{{\ell}}|$.\\
 If  $|W_{{\ell}+1}|\leq 4$ or $\|\mathcal{P}_{{\ell}+1}\|\leq 1$,
 then   $\bigcup_{i=1}^{{\ell}}\mathcal{Q}_i=\mathcal{Q}$ and  $\emptyset$  or $\mathcal{Q}$ and $\mathcal{Q}'$ (in the case $\bigcup_{i=1}^{{\ell}}\mathcal{Q}_i=\mathcal{Q}\cup\mathcal{Q}'$) are the paths with the desired properties. Otherwise, go to Step $\ell+1$.\\

Let $t\geq 2$ be the minimum integer for which we have either $|W_t|\leq
4$ or $\|\mathcal{P}_t\|\leq 1$.
 Set $x=X_{t-1}$ and $r=\|\mathcal{P}_{t}\|=n-2(t-1)$. So $\bigcup_{i=1}^{t-1}\mathcal{Q}_i$ is either  a blue path $\mathcal{Q}$ or the union two disjoint  blue paths $\mathcal{Q}$ and $\mathcal{Q}'$ between $\overline{\mathcal{P}}=e_1e_2\ldots
 e_{n-r}$ and $W'=W\cap (\bigcup_{i=1}^{t-1} V(\mathcal{Q}_i))$ with the desired properties.
 If $\bigcup _{i=1}^{t-1}\mathcal{Q}_i$ is  a blue path $\mathcal{Q},$ then either $x\in\{1,2\}$ or $x\geq
3$ and $0\leq r\leq 1$. Otherwise,
$\bigcup _{i=1}^{t-1}\mathcal{Q}_i$ is the union of  two disjoint blue paths
$\mathcal{Q}$ and $\mathcal{Q}'$ and we have either $x=0$ or $x\geq 1$ and $0\leq
r\leq 1$.

 }\end{proof}



\section{Ramsey number of 4-uniform loose cycles}

In this section we investigate Conjecture \ref{our conjecture2} for  $k=4$.
 Indeed, we determine the exact  value of
$R(\mathcal{C}^4_n,\mathcal{C}^4_m)$, where   $n> m\geq 3$ and $n=m$ is odd. When $n=m$ is even, we show that
 $R(\mathcal{C}^4_n,\mathcal{C}^4_n)$ is between two values with difference one.
 For this purpose we need the following essential  lemma.



\bigskip
\begin{lemma}\label{cn-1 implies cm for n>m}
Let $n\geq m\geq 3$, $(n,m)\neq (3,3),(4,3),(4,4)$  and

\begin{eqnarray*} t= \left\lbrace \begin{array}{ll}
\lfloor\frac{m-1}{2}\rfloor  &\mbox{ if\ \ $n>m$ },\vspace{.5 cm}\\
\lfloor\frac{m}{2}\rfloor  &\mbox{otherwise}.
\end{array} \right.\vspace{.2 cm} \end{eqnarray*}
Assume that  $\mathcal{H}=\mathcal{K}^4_{3n+t}$ is  $2$-edge
colored red and blue and  there is no copy of $\mathcal{C}^4_{n}$
in $\mathcal{H}_{\rm red}$. If
$\mathcal{C}=\mathcal{C}^4_{n-1}\subseteq \mathcal{H}_{\rm red}$,
then $\mathcal{C}^4_{m}\subseteq \mathcal{H}_{\rm blue}$.
\end{lemma}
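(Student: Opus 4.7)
Let $\mathcal{C}=e_1e_2\cdots e_{n-1}$ be the given red copy of $\mathcal{C}^4_{n-1}$, and put $W=V(\mathcal{H})\setminus V(\mathcal{C})$, so $|W|=(3n+t)-3(n-1)=t+3\ge 4$. The plan is to build a blue $\mathcal{C}^4_m$ whose ``junction'' vertices (those shared by two consecutive blue edges of the target cycle) are taken from $W$, with the remaining vertices taken from $V(\mathcal{C})$. First I would produce a red path $\mathcal{P}$ of length exactly $n-1$ on $V(\mathcal{C})$ that is maximal w.r.t.\ $W$, by opening $\mathcal{C}$ at any edge and then iterating the substitutions permitted by the definition of maximality. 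The hypothesis that $\mathcal{H}_{\mathrm{red}}$ contains no $\mathcal{C}^4_n$ is what forces the iteration to stop at length $n-1$: any substitution producing a red path of length $n$ would, via the usual ``both endpoints now lie in $W$'' extension trick, close into a red $\mathcal{C}^4_n$.

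\textbf{Apply the corollary.} Apply Corollary \ref{there is a Pl} to $\mathcal{P}$ and $W$. The output is, for some $r\ge 0$ and $W'\subseteq W$, either a single blue path $\mathcal{Q}$ with $\|\mathcal{Q}\|=2(|W'|-1)$, or two disjoint blue paths $\mathcal{Q},\mathcal{Q}'$ with $\|\mathcal{Q}\cup\mathcal{Q}'\|=2(|W'|-2)$, built as chains of good $\varpi_S$-configurations running through consecutive pairs of edges of the initial segment $\overline{\mathcal{P}}=e_1e_2\cdots e_{n-r}$. Every configuration contributes two blue edges sharing a vertex of $V(\mathcal{C})$, and consecutive configurations in a chain share an endpoint lying in $W'$. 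The side conditions on $x=|W\setminus W'|$ and on $r$ supplied by the corollary give the precise control over leftovers that I need below.

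\textbf{Close into a loose cycle of length $m$.} Set $s=\lceil m/2\rceil$. When $m$ is even I take exactly $s=m/2$ configurations and arrange them cyclically, identifying the ``last'' $W$-endpoint of the chain produced in Step 2 with the ``first'' one; this produces a blue loose cycle of length $2s=m$. When $m$ is odd I take $s-1$ configurations (a blue path of length $m-1$) and append one closing blue edge of the form $\{x,v,w,y\}$, where $x,y\in W$ are the endpoints of the blue path and $v,w$ come from an unused edge of $\mathcal{C}$; that this last edge is blue is forced by the maximality of $\mathcal{P}$, since a red edge there would allow a further substitution extending $\mathcal{P}$ to a red path of length $n$, contradicting Step~1. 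The inequality $|W|=t+3\ge s+1$ (a routine case check in each of the four combinations of $n\gtreqless m$ and parity of $m$), together with the freedom to re-select the end vertices of each configuration from the large subsets $W_{\ell 1},W_{\ell 2}\subseteq W$ supplied by Lemma \ref{spacial configuration2}, lets the junctions and the two endpoints be chosen so that no vertex is used twice and the cycle closes up correctly.

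\textbf{Main obstacle.} The tightest case is $n=m$ even: the red cycle has only $n-1=m-1$ edges, while a naive cyclic chain of $s=m/2$ good configurations would consume $2s=m$ red edges, one more than available. Resolving this forces either a pair of consecutive configurations to share a red edge in a controlled way, or else the additional $W$-slack that appears when $n=m$ (three more $W$-vertices than in the $n>m$ case) to be spent inserting an extra blue bridge that bypasses the missing red edge. A secondary difficulty arises in the two-paths subcase $\|\mathcal{Q}'\|\ne 0$ of Corollary \ref{there is a Pl}: one must verify that $\mathcal{Q}$ and $\mathcal{Q}'$ can be joined through their $W$-endpoints and an appropriate blue ``bridge'' into a single loose cycle of the correct length, rather than two disjoint shorter cycles. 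Both of these points are handled by invoking the vertex-reselection flexibility of Lemma \ref{spacial configuration2} at the appropriate steps; the excluded pairs $(n,m)\in\{(3,3),(4,3),(4,4)\}$ are exactly those for which this flexibility runs out.
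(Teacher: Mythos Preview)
Your proposal misses the central structural dichotomy that makes the argument go through, and as a result both your maximality claim and your closing mechanism are unjustified.

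First, opening the cycle $\mathcal{C}$ at an edge yields a red path of length $n-2$, not $n-1$; that path is \emph{not} automatically maximal w.r.t.\ $W$, and your iteration argument does not repair this. The substitutions in the definition of maximality add $W$-vertices to the path, so after iterating you no longer have a path on $V(\mathcal{C})$, and nothing forbids the iteration from producing a red path of length $n$ or more (a red $\mathcal{P}^4_n$ does not by itself close to a red $\mathcal{C}^4_n$). The paper resolves this by splitting into two cases. In Case~1 there is an edge $e_i$ of $\mathcal{C}$ and a vertex $z\in W$ such that the ``shifted'' edge $e=\{v_{3i-1},v_{3i},v_{3i+1},z\}$ (or its mirror) is red; then the path $\mathcal{P}=e_{i+1}\cdots e_{n-1}e_1\cdots e_{i-1}$ of length $n-2$ is maximal w.r.t.\ $W_0=W\setminus\{z\}$, because any single-edge extension of $\mathcal{P}$ concatenated with $e$ would be a red $\mathcal{C}^4_n$. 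In Case~2 every such shifted edge is blue, and one builds the blue $\mathcal{C}^4_m$ directly from those edges without invoking Corollary~\ref{there is a Pl} at all. You have no analogue of this split, so you cannot justify applying Corollary~\ref{there is a Pl}.

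Second, your closing step is the more serious gap. Maximality alone does not force an edge of the shape $\{x,v,w,y\}$ with $x,y\in W$ to be blue: the definition of maximality constrains only a narrow family of substitutions (those that reconstruct consecutive stretches of $\mathcal{P}$ with fixed endpoints), not arbitrary $4$-tuples with two $W$-vertices. In the paper the closing edges are forced blue by a completely different mechanism: the red edge $e$ from Case~1. For instance the edge $\{y',z,v_{3i-2},x\}$ is blue because otherwise it, followed by $e$, followed by $e_{i+1}\cdots e_{n-1}e_1\cdots e_{i-1}$, would be a red $\mathcal{C}^4_n$. Every one of the many closing edges in the paper's case analysis is certified this way, using the geometry around the removed edge $e_i$ and the auxiliary red edge $e$. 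Your proposal supplies no substitute for this device, so the blue cycle does not close.
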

\begin{proof}{Let
$\mathcal{C}=e_1e_2\ldots e_{n-1}$ be a copy of
$\mathcal{C}_{n-1}^4$ in $\mathcal{H}_{\rm red}$ with  edges
$$e_j=\{v_{3j-2},v_{3j-1},v_{3j},v_{3j+1}\}\hspace{0.5 cm}({\rm mod}\ \  3(n-1)),\hspace{1 cm} 1\leq j \leq n-1,$$ and $W=V(\mathcal{H})\setminus V(\mathcal{C})$. So we have $|W|=t+3.$ Consider
the following cases:

\bigskip
\noindent \textbf{Case 1. } For some edge $e_i=\{v_{3i-2},v_{3i-1},v_{3i},v_{3i+1}\},$ $1\leq i\leq n-1$, there is a
vertex $z\in W$ such that at least one of the edges  $e=\{v_{3i-1},v_{3i},v_{3i+1},z\}$
or $e'=\{v_{3i-2},v_{3i-1},v_{3i},z\}$ is red.

\medskip
We can clearly assume that the edge
$e=\{v_{3i-1},v_{3i},v_{3i+1},z\}$ is red. Set $$\mathcal{P}=e_{i+1}
e_{i+2}\ldots e_{n-1} e_1 e_2\ldots e_{i-2} e_{i-1}$$ and
$W_0=W\setminus \{z\}$ (If the edge $\{v_{3i-2},v_{3i-1},v_{3i},z\}$ is
red,  consider the path  $$\mathcal{P}=e_{i-1} e_{i-2}\ldots e_2 e_1
e_{n-1}\ldots e_{i+2} e_{i+1}$$ and  do the following process
to get a blue copy of $\mathcal{C}_m^4$).

 First let $m\leq 4$. Since $n\geq
5,$ we have $t=\lfloor\frac{m-1}{2}\rfloor=1$ and hence $|W_0|=3.$ Let
$W_0=\{u_1,u_2,u_3\}.$ We show that $\mathcal{H}_{\rm blue}$
contains $\mathcal{C}^4_m$ for each $m\in\{3,4\}$. Set $f_1=\{u_1,v_{3i-3},v_{3i-1},u_2\},$ $f_2=\{u_2,v_{3i-4},v_{3i},u_3\}$ and $f_3=\{u_3,z,v_{3i-2},u_1\}.$ Since there is no red copy of $\mathcal{C}_n^4,$ the edges $f_1,f_2$ and $f_3$  are blue. If not, let the edge $f_j,$ $1\leq j \leq 3,$ is red. Then $f_je e_{i+1}\ldots e_{n-1}e_1\ldots e_{i-1}$ is a red copy of $\mathcal{C}_n^4,$ a contradiction. So $f_1f_2f_3$ is a blue copy of  $\mathcal{C}_3^4.$
Also, since there is no red copy of $\mathcal{C}_n^4,$ the path
 $\mathcal{P}'=e_{i-3}e_{i-2}$ (we use mod $(n-1)$ arithmetic) is
maximal w.r.t. $W=W_0\cup\{z\}$. Using Lemma \ref{spacial
configuration2}, there is a good $\varpi_S$-configuration, say $C=fg,$ in $\mathcal{H}_{\rm blue}$ with end vertices $x\in f$ and  $y\in g$ in $W$ and $S\subseteq e_{i-3}e_{i-2}$. Note that, by Lemma \ref{spacial configuration2}, there are two subsets $W_1$ and $W_2$ of $W$ with $|W_1|\geq 2$ and $|W_2|\geq 1$ so that
for every distinct vertices $x'\in W_1$ and $y'\in W_2$, the path $C'=\Big((f\setminus\{x\})\cup\{x'\}\Big)\Big((g\setminus\{y\})\cup\{y'\}\Big)$ is also a good $\varpi_S$-configuration in $\mathcal{H}_{\rm blue}$ with  end vertices $x'$ and $y'$ in $W.$ Clearly,  at least one of the vertices  of $W_0,$ say $u_1,$
 is an end vertex of $C$. Let  $u\in
W_0\setminus V(C)$. Set $g_1= \{u_2,u_3,z,v_{3i-2}\}$ and $g_2=\{u,v_{3i-3},v_{3i-1},u_1\}.$ Since the edge $e$ is red, the edges $g_1$ and $g_2$ are blue (otherwise, we can find a red copy of $\mathcal{C}_n^4$) and $Cg_1g_2$
 is a blue copy of $\mathcal{C}_4^4$.\\

Now let $m\geq 5.$ Clearly $|W_0|=t+2\geq 4$. Since there is no red
copy of $\mathcal{C}^4_n$, $\mathcal{P}$ is a maximal path w.r.t.
$W_0$. Applying Corollary \ref{there is a Pl}, there are  two
disjoint blue paths $\mathcal{Q}$ and $\mathcal{Q}'$
 between
 $\overline{\mathcal{P}}$, the path obtained from $\mathcal{P}$ by deleting the last $r$ edges for some $r\geq 0$,
  and $W'\subseteq W_0$ with the mentioned properties. Consider the paths
 $\mathcal{Q}$ and $\mathcal{Q}'$ with $\|\mathcal{Q}\|\geq \|\mathcal{Q}'\|$
  so that $\ell'=\|\mathcal{Q}\cup\mathcal{Q}'\|$ is maximum. Among these paths choose $\mathcal{Q}$ and $\mathcal{Q}'$, where $\|\mathcal{Q}\|$ is maximum.
  Since $\|\mathcal{P}\|=n-2, $
  by Corollary \ref{there is a Pl}, we have $r=n-2-\ell'$.\\


\noindent {\it Subcase $1$}.
 $\|\mathcal{Q}'\|\neq 0$.\\
  Set $T=W_0\setminus W'.$ Let  $x,y$ and $x',y'$ be the end vertices of $\mathcal{Q}$
and $\mathcal{Q}'$ in $W'$, respectively. Using Corollary
\ref{there is a Pl}, we have one of the following cases:

\begin{itemize}
\item[I.] $|T|\geq 2.$\\
It is easy to see that $\ell'\leq 2t-4$ and so
 $r\geq 2$. Hence this case does not occur by
Corollary \ref{there is a Pl}.

\item[II.] $|T|=1.$\\
Let
$T=\{u\}$.
One can easily check that $\ell'=2t-2.$  If $n>m$, then $r\geq 2,$  a contradiction to Corollary \ref{there is a Pl}.
Therefore, we may assume that $n=m$.
 If $n$ is even, then  $\ell'=n-2$. Remove the last two edges of
$\mathcal{Q}\cup \mathcal{Q}'$ to get two disjoint blue paths
$\overline{\mathcal{Q}}$ and $\overline{\mathcal{Q}'}$ so that  $\|\overline{\mathcal{Q}}\cup \overline{\mathcal{Q}'}\|=n-4$ and  $(\overline{\mathcal{Q}}\cup \overline{\mathcal{Q}'})\cap((e_{i-2}\setminus\{f_{\mathcal{P},e_{i-2}}\})\cup e_{i-1})=\emptyset$ (note that by the proof of Corollary \ref{there is a Pl}, this is possible). By Corollary \ref{there is a Pl}, there is a vertex
 $w\in e_{i-3}\setminus e_{i-4}$ so that $w\notin
V(\overline{\mathcal{Q}}\cup \overline{\mathcal{Q}'}).$
 We can without loss of generality assume that $\mathcal{Q}=\overline{\mathcal{Q}}.$ First
let $\|\overline{\mathcal{Q}'}\|>0$ and $x',y''$ with  $y''\neq y'$ be
end vertices of $\overline{\mathcal{Q}'}$ in $W'.$ Set
\begin{eqnarray*}
f_1=\{y'',v_{3i-3},v_{3i-1},u\}, f_2=\{u,z,v_{3i-2},y'\},  f_3=\{y',v_{3i},v_{3i-4},x\}.
\end{eqnarray*}
Since the edge $e$ is red, then the edges $f_i,$ $1\leq i \leq 3,$ are blue (otherwise we can find a red copy of $\mathcal{C}_n^4$, a contradiction to our assumption).
 If the edge
$f=\{y,w,v_{3i-7},x'\}$ is blue, then $\mathcal{Q}f\overline{\mathcal{Q}'}f_1f_2f_3$
 is a copy of
$\mathcal{C}^4_m$ in $\mathcal{H}_{\rm blue}$.
Otherwise, the edge
$g=\{y,v_{3i-6},v_{3i-5},y''\}$ is blue (if not, $fge_{i-1} \ldots e_{n-1} e_1 \ldots e_{i-3}$ is a red copy of $\mathcal{C}_n^4,$ a contradiction).
 Also, since there is no red copy of $\mathcal{C}_n^4,$ the edges
\begin{eqnarray*}
g_1=\{x',v_{3i-3},v_{3i-1},u\}, g_2=\{u,z,v_{3i-2},y'\}, g_3=\{y',v_{3i},v_{3i-4},x\},
\end{eqnarray*}
are blue. Clearly  $\mathcal{Q}g\overline{\mathcal{Q}'}g_1g_2g_3$
 is a
blue copy of $\mathcal{C}^4_m$. Now, we may assume that
$\|\overline{\mathcal{Q}'}\|=0.$   In this case, set
$f'=\{y,w,v_{3i-7},x'\}$. If the edge  $f'$ is blue, then $\mathcal{Q}f'g_1g_2g_3$
  is a blue copy of $\mathcal{C}^4_m$. Otherwise,
 the edge  $g'=\{y,v_{3i-6},v_{3i-5},y'\}$ is blue (if not, $f'g'e_{i-1} \ldots e_{n-1} e_1 \ldots e_{i-3}$ makes a red $\mathcal{C}_n^4$). Similarly, since there is no red copy of $\mathcal{C}_n^4$ and the edge $e$ is red,
\begin{eqnarray*}
\mathcal{Q}g'\{y',v_{3i-3},v_{3i-1},u\}\{u,z,v_{3i-2},x'\}\{x',v_{3i},v_{3i-4},x\},
\end{eqnarray*}
is a blue copy of  $\mathcal{C}^4_m$. \\

Therefore, we may assume that
 $n$ is odd. Clearly,  $\ell'=n-3$ and $r\geq 1$. Again, since there is no red copy of $\mathcal{C}_n^4,$ the edges
\begin{eqnarray*}
h_1=\{y,v_{3i-4},v_{3i-1},x'\}, h_2=\{y',v_{3i-2},z,u\}, h_3=\{u,v_{3i},v_{3i-3},x\},
\end{eqnarray*}
 are blue and $\mathcal{Q}h_1\mathcal{Q}'h_2h_3,$
 makes a copy of $\mathcal{C}^4_m$ in $\mathcal{H}_{\rm blue}$.

\item[III.] $|T|=0.$\\
Clearly we have $\ell'=2t$. First let $m$ be odd. Therefore, we have  $\ell'=m-1.$
Remove the last two edges of $\mathcal{Q}\cup \mathcal{Q}'$ to get
two disjoint blue paths $\overline{\mathcal{Q}}$ and
$\overline{\mathcal{Q}'}$ so that $\|\overline{\mathcal{Q}}\cup \overline{\mathcal{Q}'}\|=m-3$ and  $(\overline{\mathcal{Q}}\cup \overline{\mathcal{Q}'})\cap((e_{i-2}\setminus\{f_{\mathcal{P},e_{i-2}}\})\cup e_{i-1})=\emptyset$ (this is possible, by the proof of Corollary \ref{there is a Pl}). We can without loss of generality assume that
$\mathcal{Q}=\overline{\mathcal{Q}}.$ First
let $\|\overline{\mathcal{Q}'}\|>0$ and $x',y''$ with  $y''\neq y'$ be
end vertices of $\overline{\mathcal{Q}'}$ in $W'.$ Since the edge $e$ is red and  there is no red copy of $\mathcal{C}_n^4,$ the edges
\begin{eqnarray*}
f_1=\{y,v_{3i-3},v_{3i-1},x'\}, f_2=\{y'',v_{3i-4},v_{3i},y'\}, f_3=\{y',z,v_{3i-2},x\},
\end{eqnarray*}
are blue and so $\mathcal{Q}f_1\overline{\mathcal{Q}'}f_2f_3$
is a blue copy of $\mathcal{C}^4_m$.
Now let
$\|\overline{\mathcal{Q}'}\|=0$. Again, since there is no red copy of $\mathcal{C}_n^4,$ the edge $g_1=\{x',v_{3i-4},v_{3i},y'\}$
 is  blue and
$\mathcal{Q}f_1g_1f_3,$
 is a blue  copy of $\mathcal{C}^4_m$.

Now let $m$ be even. If $n>m,$ then
 $\ell'=m-2$ and
$r\geq 1$. Clearly,
\begin{eqnarray*}
\mathcal{Q}\{y,v_{3i-3},v_{3i-1},x'\}\mathcal{Q}'\{y',v_{3i},v_{3i-4},x\},
\end{eqnarray*}
 is a blue copy of  $\mathcal{C}_m^4$. Therefore, we may assume that $n=m$. Thereby  $\ell'=m$.
Remove the last two edges of
$\mathcal{Q}\cup \mathcal{Q}'$ to get two disjoint blue paths
$\overline{\mathcal{Q}}$ and $\overline{\mathcal{Q}'}$ so that $\|\overline{\mathcal{Q}}\cup \overline{\mathcal{Q}'}\|=m-2$ and
 $(\overline{\mathcal{Q}}\cup \overline{\mathcal{Q}'})\cap((e_{i-2}\setminus\{f_{\mathcal{P},e_{i-2}}\})\cup e_{i-1})=\emptyset$.
 We can without loss of generality assume that $\mathcal{Q}=\overline{\mathcal{Q}}.$ First
let $\|\overline{\mathcal{Q}'}\|>0$ and $x',y''$ with  $y''\neq y'$ be
end vertices of $\overline{\mathcal{Q}'}$ in $W'.$ Since there is no red copy of $\mathcal{C}^4_n,$ the edges $h_1=\{y,v_{3i-3},v_{3i-1},x'\}$ and $h_2=\{y'',v_{3i},v_{3i-4},x\}$ are blue and $\mathcal{Q}h_1\overline{\mathcal{Q}'}h_2$ forms a blue copy of $\mathcal{C}^4_m.$ If $\|\overline{\mathcal{Q}'}\|=0,$ then $\mathcal{Q}h_1\{x',v_{3i},v_{3i-4},x\}$ is a blue copy of $\mathcal{C}^4_m.$

\end{itemize}

\noindent {\it Subcase $2$}. $\|\mathcal{Q}'\|=0$.\\
  Let $x$ and $y$  be the end
vertices of $\mathcal{Q}$ in $W'$ and  $T=W_0\setminus W'$. Using Corollary \ref{there is a Pl} we have the following:

\begin{itemize}
\item[I.] $|T|\geq 3$.\\
In this case, clearly $\ell'\leq 2(t-2)$ and so
 $r\geq 2$. This is a contradiction to Corollary \ref{there is a Pl}.

\item[II.] $|T|=2.$\\
Let $T=\{u_1,u_2\}$. So we have   $\ell'=2t-2$. First let $m$ be odd.  Hence,
$\ell'=m-3$ and $r\geq 1$. Since there is no red copy of $\mathcal{C}_n^4$ and the edge $e$ is red,
 the edges
 \begin{eqnarray*}
f_1=\{y,v_{3i-4},v_{3i-1},u_1\}, f_2=\{u_1,v_{3i-3},v_{3i},u_2\}, f_3=\{u_2,z,v_{3i-2},x\},
\end{eqnarray*}
are blue. If not, suppose that the edge $f_j,$ $1\leq j\leq 3,$ is red. So  $f_j e e_{i+1}e_{i+2} \ldots$ $ e_{n-1} e_1 \ldots e_{i-1}$ is a red copy of $\mathcal{C}^4_n,$ a contradiction. Thereby,
 $\mathcal{Q}f_1f_2f_3$
makes a blue copy of $\mathcal{C}^4_m$.

Now let $m$
be even. If $n>m,$ then  $\ell'=m-4$ and $r\geq 3$. Using Corollary
\ref{there is a Pl}, there is a vertex $w\in e_{i-4}\setminus
e_{i-5}$ so that $w\notin V(\mathcal{Q}).$
Since $\mathcal{P}$ is maximal w.r.t. $\overline{W}=\{x,y,u_1,u_2,z\}$,
using Lemma \ref{spacial
configuration2},  there is a good $\varpi_S$-configuration,
say $C_1=fg$, in $\mathcal{H}_{\rm blue}$ with end vertices $x'\in f$ and $y'\in g$ in $\overline{W}$ and
\begin{eqnarray*}
S\subseteq \Big((e_{i-3}\setminus f_{\mathcal{P},e_{i-3}})\cup \{w\}\Big)\cup e_{i-2}.
\end{eqnarray*}
Moreover, by Lemma \ref{spacial configuration2}, there are two subsets $W_1$ and $W_2$ of $\overline{W}$ with $|W_1|\geq 3$ and $|W_2|\geq 2$ so that
for every distinct vertices $\overline{x'}\in W_1$ and $\overline{y'}\in W_2$, the path $C'_1=\Big((f\setminus\{x'\})\cup\{\overline{x'}\}\Big)\Big((g\setminus\{y'\})\cup\{\overline{y'}\}\Big)$ is also a good $\varpi_S$-configuration in $\mathcal{H}_{\rm blue}$ with  end vertices $\overline{x'}$ and $\overline{y'}$ in $\overline{W}.$
 Since $|W_1|\geq 3$ and $\ell'$ is maximum,
 we may
assume that $y$ and $z$ or $x$ and $z$ are  end vertices of $C_1$ in $\overline{W}$. By symmetry suppose that $y$ and $z$  are  end vertices of $C_1$ in $\overline{W}.$ Since there is no red copy of $\mathcal{C}^4_n$ and the edge $e$ is red, then
\begin{eqnarray*}
\mathcal{Q}C_1\{z,v_{3i-2},u_1,u_2\}\{u_2,v_{3i-1},v_{3i-3},x\},
\end{eqnarray*}
is a blue copy of $\mathcal{C}^4_m$. Now, we may assume that $n=m$. Clearly $\ell'=m-2.$ By Corollary \ref{there is a Pl}, there is a vertex $w'\in e_{i-1}\setminus
e_{i-2}$ so that $w'\notin V(\mathcal{Q}).$ Again, since  there is no copy of
 $\mathcal{C}^4_n$ in $\mathcal{H}_{\rm red},$ so
\begin{eqnarray*}
\mathcal{Q}\{y,u_1,v_{3i-1},w'\}\{w',v_{3i},u_2,x\},
\end{eqnarray*}
 is a blue copy
of $\mathcal{C}^4_m$.

\item[III.] $|T|=1.$\\
Clearly  $\ell'=2t$. Let $T=\{u_1\}$.  First let $m$ be odd. Therefore, $\ell'=m-1$.
By Corollary \ref{there is a Pl} there is a vertex
 $w\in e_{i-1}\setminus e_{i-2}$ so that
$w\notin V(\mathcal{Q})$. Clearly the edge $g=\{y,w,z,x\}$ is blue (otherwise $gee_{i+1}\ldots e_{n-1}e_1\ldots e_{i-1}$ makes a red copy of $\mathcal{C}^4_n$). Thereby $\mathcal{Q}g$
 is a blue $\mathcal{C}^4_m$.
Now, suppose  that $m$ is even. If $n>m$, then $\ell'=m-2$ and
$r\geq 1$.  Since the edge $e$ is red and  there is no red copy of $\mathcal{C}^4_{n}$,
\begin{eqnarray*}
\mathcal{Q}\{y,v_{3i-2},z,u_1\}\{u_1,v_{3i-1},v_{3i-3},x\},
\end{eqnarray*}
 is a copy of
$\mathcal{C}^4_m$ in $\mathcal{H}_{\rm blue}$. If $n=m$, then
 $\ell'=m$.
In this case, remove the last two edges of
$\mathcal{Q}$ to get two disjoint blue paths
$\overline{\mathcal{Q}}$ and $\overline{\mathcal{Q}'}$ so that $\|\overline{\mathcal{Q}}\cup \overline{\mathcal{Q}'}\|=m-2$ and $(\overline{\mathcal{Q}}\cup \overline{\mathcal{Q}'})\cap((e_{i-2}\setminus\{f_{\mathcal{P},e_{i-2}}\})\cup e_{i-1})=\emptyset$. By symmetry we may assume that $\|\overline{\mathcal{Q}}\|\geq \|\overline{\mathcal{Q}'}\|. $
  First
suppose that  $\|\overline{\mathcal{Q}'}\|=0.$ Then we may suppose that $x,y'$ with  $y'\neq y$ be
end vertices of $\overline{\mathcal{Q}}$ in $W'.$ Since there is no red copy of $\mathcal{C}^4_n$ and the edge $e$ is red, the edges
 $h_1=\{y',v_{3i-3},v_{3i-1},y\}$ and $h_2=\{y,v_{3i},v_{3i-4},x\}$ are blue and $\overline{\mathcal{Q}}h_1h_2$ forms a blue copy of $\mathcal{C}^4_m.$ So we may assume that  $\|\overline{\mathcal{Q}'}\|>0$. Let $x',y'$ and $x'',y''$ be end vertices of $\overline{\mathcal{Q}}$ and $\overline{\mathcal{Q}'}$ in $W',$ respectively. One can easily check that
 \begin{eqnarray*}
  \overline{\mathcal{Q}}\{y',v_{3i-3},v_{3i-1},x''\}\overline{\mathcal{Q}'}\{y'',v_{3i},v_{3i-4},x'\},
  \end{eqnarray*}
 is a blue copy of $\mathcal{C}^4_m.$


\item[IV.] $|T|=0.$\\
Clearly, we have  $\ell'=2t+2$.  First let $m$ be odd. Therefore, $\ell'=m+1$.
 Remove the last two edges of
$\mathcal{Q}$ to get two disjoint blue paths
$\overline{\mathcal{Q}}$ and $\overline{\mathcal{Q}'}$ so that $\|\overline{\mathcal{Q}}\cup \overline{\mathcal{Q}'}\|=m-1$ and $(\overline{\mathcal{Q}}\cup \overline{\mathcal{Q}'})\cap((e_{i-2}\setminus\{f_{\mathcal{P},e_{i-2}}\})\cup e_{i-1})=\emptyset$. By symmetry we may assume that $\|\overline{\mathcal{Q}}\|\geq \|\overline{\mathcal{Q}'}\|. $ If $\|\overline{\mathcal{Q}'}\|=0,$ then we may suppose that $x,y'$ with  $y'\neq y$ be
end vertices of $\overline{\mathcal{Q}}$ in $W'.$
Clearly the edge $g=\{y',v_{3i-2},z,x\}$ is blue (otherwise $gee_{i+1}\ldots e_{n-1}e_1\ldots e_{i-1}$ makes a red copy of $\mathcal{C}^4_n$). Thereby $\overline{\mathcal{Q}}g$
 is a blue $\mathcal{C}^4_m$. If $\|\overline{\mathcal{Q}'}\|>0,$ then remove the last two edges of
$\overline{\mathcal{Q}}\cup \overline{\mathcal{Q}'}.$
By an argument similar to the case $\|\mathcal{Q}'\|\neq 0$ and $|T|=0$,
 we can find a blue copy of $\mathcal{C}^4_m.$ When $m$ is even, by removing the last two edges of $\mathcal{Q}$, one of the before cases holds. So
  we omit the proof here.

\end{itemize}
\bigskip
\noindent \textbf{Case 2. } For every edge
$e_i=\{v_{3i-2},v_{3i-1},v_{3i},v_{3i+1}\}$, $1\leq i\leq n-1$,
and every vertex $z\in W$,  the edges
$\{v_{3i-1},v_{3i},v_{3i+1},z\}$ and
$\{v_{3i-2},v_{3i-1},v_{3i},z\}$ are blue.

\medskip
Let $W=\{x_1,x_2,\ldots,x_{t},u_1,u_2,u_3\}$. We have two following  subcases:\\

\noindent {\it Subcase 1.}
 For some edge
$e_j=\{v_{3j-2},v_{3j-1},v_{3j},v_{3j+1}\}$, $1\leq j\leq n-1$,
there are
 vertices $u$ and
$v$ in $W$ so that at least one of the  edges
$\{v_{3j-2},v_{3j-1},u,v\}$ or $\{v_{3j},v_{3j+1},u,v\}$ is
blue.\\
We can without loss of generality   assume that the edge
$\{v_{3j-2},v_{3j-1},u,v\}$ is blue (if the edge
$\{v_{3j},v_{3j+1},u,v\}$ is blue, the proof is similar). By
symmetry we may assume that $e_j=e_1$ and $\{u,v\}=\{u_1,u_2\}$.
Set
\begin{eqnarray*}
&&e_{0}^{\prime}=(e_{1}\setminus\{v_{3},v_4\})\cup\{u_{1},u_2\},\\
&&e_{1}^{\prime}=(e_{1}\setminus\{v_{1}\})\cup\{x_1\}
\end{eqnarray*}
For $2\leq i\leq m-2$ set
\begin{eqnarray*}
e_i^{\prime}= \left\lbrace
\begin{array}{ll}
(e_i\setminus \{l_{\mathcal{C},e_i}\})\cup\{x_{\frac{i+1}{2}}\}  &\mbox{if~}  i~\mbox{is~odd},\vspace{.5 cm}\\
(e_i\setminus\{f_{\mathcal{C},e_i}\})\cup\{x_{\frac{i}{2}}\}
&\mbox{if~} i~\mbox{is~even}.
\end{array}
\right.\vspace{.2 cm}
\end{eqnarray*}
\noindent Also, let
\begin{eqnarray*}
 e'_{m-1}= \left\lbrace
\begin{array}{ll}
 (e_{m-1}\setminus
\{l_{\mathcal{C},e_{m-1}}\})\cup \{u_1\}  &\mbox{if $m$ is
even},\vspace{.5 cm}\\ (e_{n-1}\setminus
\{f_{\mathcal{C},e_{n-1}}\})\cup \{x_{\frac{m-1}{2}}\} &\mbox{if
$m$ is odd}. \end{array} \right.\vspace{.2 cm}
\end{eqnarray*}
 Thereby,
$e_0^{\prime}e_1^{\prime}\ldots e^{\prime}_{m-1}$ forms a blue copy of
$\mathcal{C}_m^4$.\\

\noindent {\it Subcase 2.} For every edge
$e_j=\{v_{3j-2},v_{3j-1},v_{3j},v_{3j+1}\}$, $1\leq j\leq n-1$,
and every  vertices $u,v$  in $W$, the edges
$\{v_{3j-2},v_{3j-1},u,v\}$ and $\{v_{3j},v_{3j+1},u,v\}$ are
red.\\
One can easily check that
\begin{eqnarray*}
\{v_1,v_2,u_1,u_2\}\{u_2,u_3,v_3,v_4\}e_2\ldots e_{n-1},
\end{eqnarray*}
 is a red copy of  $\mathcal{C}_n^4$. This contradiction completes the proof.

 }\end{proof}




 The following results are the main results of this
section.


 \bigskip
\begin{theorem}\label{main theorem4}
For every $n\geq m+1\geq 4$,
$$R(\mathcal{C}^4_n,\mathcal{C}^4_m)=3n+\Big\lfloor\frac{m-1}{2}\Big\rfloor.$$
\end{theorem}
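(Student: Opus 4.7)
The lower bound $R(\mathcal{C}^4_n, \mathcal{C}^4_m) \geq 3n + \lfloor(m-1)/2\rfloor$ is supplied by Lemma 1 of \cite{subm}, as noted in the introduction, so the plan is to establish the matching upper bound by induction on $n$ with $m$ fixed. I would first dispose of the case $m=3$: Theorem \ref{R(C3,C4)} gives $R(\mathcal{C}^4_n, \mathcal{C}^4_3) = 3n + 1 = 3n + \lfloor(m-1)/2\rfloor$ for every $n \geq 3$, which covers all $n \geq 4$. So I may henceforth assume $m \geq 4$, and hence $n \geq m+1 \geq 5$.

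Fix such $n$ and $m$, set $N = 3n + \lfloor(m-1)/2\rfloor$, and let $\mathcal{H} = \mathcal{K}^4_N$ be $2$-edge colored red and blue with no red $\mathcal{C}^4_n$; the aim is to produce a blue $\mathcal{C}^4_m$. The induction hypothesis on $n$ (when $n-1 > m$), together with the paper's separately proved diagonal results (when $n-1 = m$), supplies
$$R(\mathcal{C}^4_{n-1}, \mathcal{C}^4_m) \leq 3(n-1) + \lfloor(m-1)/2\rfloor = N - 3.$$
For the induction base $n - 1 = m$, the paper furnishes the exact value $R(\mathcal{C}^4_m, \mathcal{C}^4_m) = 3m + \lfloor(m-1)/2\rfloor$ when $m$ is odd and the upper bound $R(\mathcal{C}^4_m, \mathcal{C}^4_m) \leq 3m + \lfloor(m-1)/2\rfloor$ when $m$ is even, either of which suffices. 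Hence $\mathcal{H}$ contains either a blue $\mathcal{C}^4_m$, in which case we are already done, or else a red copy $\mathcal{C}$ of $\mathcal{C}^4_{n-1}$.

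In the latter case I would invoke Lemma \ref{cn-1 implies cm for n>m} with $t = \lfloor(m-1)/2\rfloor$: because $n > m$ we are in its ``$n > m$'' branch, the vertex count matches $|V(\mathcal{H})| = 3n + t$, and the inequalities $n \geq 5$ and $m \geq 4$ automatically exclude all three forbidden triples $(3,3), (4,3), (4,4)$. The lemma then outputs a blue $\mathcal{C}^4_m$ inside $\mathcal{H}$, contradicting our assumption and closing the induction. The true difficulty is not this reduction, which is a short two-line deduction, but Lemma \ref{cn-1 implies cm for n>m} itself; once that structural lemma is in hand, only the bookkeeping of parameters and the careful handling of the exceptional triples remain, and these are taken care of above via Theorem \ref{R(C3,C4)} for $m = 3$ and via the strict inequality $n \geq m+1 \geq 5$ for the other two excluded pairs.
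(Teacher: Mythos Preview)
Your reduction is sound in spirit but differs from the paper's, and it contains one factual slip. The paper does \emph{not} furnish the bound $R(\mathcal{C}^4_m,\mathcal{C}^4_m)\le 3m+\lfloor(m-1)/2\rfloor$ when $m$ is even; that is precisely the open case highlighted in Corollary~\ref{R(C^4_n,C^4_n)}. Theorem~\ref{main theorem1} gives only $R(\mathcal{C}^4_m,\mathcal{C}^4_m)\le 3m+\lfloor m/2\rfloor$, which for even $m$ is one larger than what you quote. Fortunately this weaker bound is still $\le N$, so your argument survives once the misstatement is corrected; the displayed inequality $R(\mathcal{C}^4_{n-1},\mathcal{C}^4_m)\le N-3$ should be relaxed to $\le N$ in the base case.

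The paper's own proof takes a different route precisely to avoid forward-referencing Theorem~\ref{main theorem1}. It inducts on $m+n$ rather than on $n$ alone, and in the critical case $n=m+1$ it invokes the off-diagonal hypothesis $R(\mathcal{C}^4_{n-1},\mathcal{C}^4_{n-2})$ instead of the diagonal one. This yields either a red $\mathcal{C}^4_{n-1}$ (whence Lemma~\ref{cn-1 implies cm for n>m} gives a blue $\mathcal{C}^4_{m}$) or a blue $\mathcal{C}^4_{n-2}$; in the latter situation the colours are swapped and Lemma~\ref{cn-1 implies cm for n>m} is applied a second time (now with $n'=m'=n-1$) to produce a red $\mathcal{C}^4_{n-1}$, a contradiction. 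This double application of the key lemma makes Theorem~\ref{main theorem4} logically independent of Theorem~\ref{main theorem1}, whereas your argument requires proving the diagonal bound first. Both orderings are legitimate since Theorem~\ref{main theorem1} does not cite Theorem~\ref{main theorem4}, but the paper's arrangement is self-contained as written.
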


\begin{proof} {We give a proof by induction on $m+n$.  By  Theorems
\ref{R(Pk3,Pk3)} and  \ref{R(C3,C4)} we may assume that $n\geq 5$.
Suppose to the contrary that
$\mathcal{H}=\mathcal{K}^4_{3n+\lfloor\frac{m-1}{2}\rfloor}$ is
2-edge colored red and blue with  no  red copy of
$\mathcal{C}^4_n$  and no  blue copy of $\mathcal{C}^4_m$.
 Consider the following cases:

\bigskip
\noindent \textbf{Case 1. } $n=m+1.$

\medskip
By induction hypothesis,
\begin{eqnarray*}
R(\mathcal{C}^4_{n-1},\mathcal{C}^4_{n-2})=
3(n-1)+\Big\lfloor\frac{n-3}{2}\Big\rfloor< 3n+
\Big\lfloor\frac{n-2}{2}\Big\rfloor.
\end{eqnarray*}

\noindent If there is a  copy of $\mathcal{C}^4_{n-1}$ in
$\mathcal{H}_{\rm red},$ then using Lemma \ref{cn-1 implies cm for
n>m} we have a blue copy of $\mathcal{C}^4_{n-1}.$  So we may
assume that there is no red copy of $\mathcal{C}^4_{n-1}.$
Therefore, there is a copy of $\mathcal{C}^4_{n-2}$ in
$\mathcal{H}_{\rm blue}.$ Since there is no blue copy of $\mathcal{C}^4_{n-1},$
 applying Lemma \ref{cn-1 implies cm for
n>m},   we have a red copy of $\mathcal{C}^4_{n-1}.$ This  is a contradiction
to our assumption.

\bigskip
\noindent \textbf{Case 2. }$n> m+1$.

\medskip

By the induction hypothesis
\begin{eqnarray*} R(\mathcal{C}^4_{n-1},\mathcal{C}^4_{m})=
3(n-1)+\Big\lfloor\frac{m-1}{2}\Big\rfloor<
3n+\Big\lfloor\frac{m-1}{2}\Big\rfloor.
\end{eqnarray*}
 Since there is no blue
copy of $\mathcal{C}^4_m$, we have a copy of $\mathcal{C}^4_{n-1}$
in $\mathcal{H}_{\rm red}$.
  Using  Lemma \ref{cn-1 implies cm for n>m}, we have a blue copy of $\mathcal{C}^4_{m}$. This contradiction completes the proof.
}\end{proof}

\bigskip
\begin{theorem}\label{main theorem1}
For every $n\geq  4$,
\begin{eqnarray*}R(\mathcal{C}^4_n,\mathcal{C}^4_n)\leq 3n+\Big\lfloor\frac{n}{2}\Big\rfloor.
\end{eqnarray*}
\end{theorem}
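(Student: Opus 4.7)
The plan is to derive Theorem \ref{main theorem1} directly from Theorem \ref{main theorem4} and Lemma \ref{cn-1 implies cm for n>m}, without setting up any new induction. First dispose of the base case $n=4$: Theorem \ref{R(Pk3,Pk3)}(b) already gives $R(\mathcal{C}^4_4,\mathcal{C}^4_4)=13\leq 14=3\cdot 4+\lfloor 4/2\rfloor$, so for the rest of the argument we may assume $n\geq 5$, which also keeps us outside the excluded triples $(3,3),(4,3),(4,4)$ appearing in the hypothesis of Lemma \ref{cn-1 implies cm for n>m}.

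Suppose for contradiction that $\mathcal{H}=\mathcal{K}^4_{3n+\lfloor n/2\rfloor}$ admits a $2$-edge-coloring with neither a red nor a blue copy of $\mathcal{C}^4_n$. By Theorem \ref{main theorem4} applied with the parameter $m=n-1$,
\[
R(\mathcal{C}^4_n,\mathcal{C}^4_{n-1})=3n+\Big\lfloor\frac{n-2}{2}\Big\rfloor\leq 3n+\Big\lfloor\frac{n}{2}\Big\rfloor,
\]
so $\mathcal{H}$ must contain a red $\mathcal{C}^4_n$ or a blue $\mathcal{C}^4_{n-1}$; the former is forbidden, hence $\mathcal{H}_{\mathrm{blue}}$ contains a copy of $\mathcal{C}^4_{n-1}$. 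Swapping the two colors and repeating the same inequality (the Ramsey number is symmetric), $\mathcal{H}_{\mathrm{red}}$ also contains a copy of $\mathcal{C}^4_{n-1}$.

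Now I would apply Lemma \ref{cn-1 implies cm for n>m} in its diagonal regime $n=m$. In that regime the lemma takes $t=\lfloor m/2\rfloor=\lfloor n/2\rfloor$, which matches the vertex count of $\mathcal{H}$ on the nose, so the lemma is genuinely applicable to $\mathcal{H}$. Its hypothesis ``no red $\mathcal{C}^4_n$ and a red $\mathcal{C}^4_{n-1}$'' is exactly what the previous paragraph produced, and its conclusion supplies a blue copy of $\mathcal{C}^4_n$ in $\mathcal{H}$, contradicting our standing assumption. This contradiction yields the claimed inequality $R(\mathcal{C}^4_n,\mathcal{C}^4_n)\leq 3n+\lfloor n/2\rfloor$.

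The essential content of the proof sits in Lemma \ref{cn-1 implies cm for n>m} (which has already been established), so there is no serious obstacle left here; the only points that need a careful sentence are the two bookkeeping checks, namely that $\lfloor(n-2)/2\rfloor\leq\lfloor n/2\rfloor$ so that Theorem \ref{main theorem4} places a monochromatic $\mathcal{C}^4_{n-1}$ in $\mathcal{H}$ in each color, and that for $n\geq 5$ the diagonal value of the $t$-parameter in the lemma coincides with $\lfloor n/2\rfloor$ so that $\mathcal{H}$ is the precise host hypergraph the lemma requires.
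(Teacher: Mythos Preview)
Your proof is correct and follows essentially the same route as the paper: both place a monochromatic $\mathcal{C}^4_{n-1}$ inside $\mathcal{H}$ and then invoke Lemma~\ref{cn-1 implies cm for n>m} in its diagonal regime $n=m$ (with $t=\lfloor n/2\rfloor$) to force the forbidden $\mathcal{C}^4_n$. The only difference is how the copy of $\mathcal{C}^4_{n-1}$ is supplied---the paper uses an induction hypothesis on $R(\mathcal{C}^4_{n-1},\mathcal{C}^4_{n-1})$, while you use the already-established off-diagonal value $R(\mathcal{C}^4_n,\mathcal{C}^4_{n-1})$ from Theorem~\ref{main theorem4}; either source works and the remaining bookkeeping you flag (the inequality $\lfloor(n-2)/2\rfloor\le\lfloor n/2\rfloor$ and the match of the $t$-parameter) is exactly right.
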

\begin{proof} { We give a proof  by induction on $n$. Applying Theorem
\ref{R(Pk3,Pk3)} the statement is true for $n=4$.
 Suppose  that, on the contrary,  the edges of
 $\mathcal{H}=\mathcal{K}^3_{3n+\lfloor\frac{n}{2}\rfloor}$ can be
 colored red and blue with  no  red copy of
$\mathcal{C}^4_n$  and no  blue copy of $\mathcal{C}^4_n$. By the
induction assumption,
\begin{eqnarray*}R(\mathcal{C}^4_{n-1},\mathcal{C}^4_{n-1})\leq
3(n-1)+\Big\lfloor\frac{n-1}{2}\Big\rfloor<
3n+\Big\lfloor\frac{n}{2}\Big\rfloor.
\end{eqnarray*} By symmetry  we may assume that there is a red copy of $\mathcal{C}^4_{n-1}$.
Using Lemma \ref{cn-1 implies cm for n>m} we have a  copy of
$\mathcal{C}^4_{n}$ in $\mathcal{H}_{\rm blue}$. This is a
contradiction. }\end{proof}

Using Lemma 1 of \cite{subm} and Theorem \ref{main theorem1} we
conclude the following corollary.

\begin{corollary}\label{R(C^4_n,C^4_n)}
Let  $n\geq 4$.  If $n$ is odd, then
$R(\mathcal{C}^4_n,\mathcal{C}^4_n)=
3n+\Big\lfloor\frac{n-1}{2}\Big\rfloor.$ Otherwise,
\begin{eqnarray*}3n+\Big\lfloor\frac{n-1}{2}\Big\rfloor \leq
R(\mathcal{C}^4_n,\mathcal{C}^4_n)\leq
3n+\Big\lfloor\frac{n-1}{2}\Big\rfloor+1.
\end{eqnarray*}
\end{corollary}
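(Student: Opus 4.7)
The plan is to combine the upper bound already established in Theorem \ref{main theorem1} with the matching lower bound provided by Lemma 1 of \cite{subm}. Recall that Lemma 1 of \cite{subm} gives the generic lower bound $R(\mathcal{C}^k_n,\mathcal{C}^k_m)\geq (k-1)n+\lfloor\frac{m-1}{2}\rfloor$ by exhibiting a standard 2-coloring on $(k-1)n+\lfloor\frac{m-1}{2}\rfloor-1$ vertices with no red $\mathcal{C}^k_n$ and no blue $\mathcal{C}^k_m$. Specialized to $k=4$ and $m=n$, this yields
\[
R(\mathcal{C}^4_n,\mathcal{C}^4_n)\geq 3n+\Big\lfloor\frac{n-1}{2}\Big\rfloor,
\]
which is exactly the lower bound appearing in the corollary.

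The remaining point is to compare the upper bound $3n+\lfloor\frac{n}{2}\rfloor$ coming from Theorem \ref{main theorem1} with the target expression $3n+\lfloor\frac{n-1}{2}\rfloor$. I would split into the two parity cases. When $n$ is odd, $\lfloor\frac{n}{2}\rfloor=\frac{n-1}{2}=\lfloor\frac{n-1}{2}\rfloor$, so Theorem \ref{main theorem1} already gives the upper bound $3n+\lfloor\frac{n-1}{2}\rfloor$, which matches the lower bound above; this pins down the exact value $R(\mathcal{C}^4_n,\mathcal{C}^4_n)=3n+\lfloor\frac{n-1}{2}\rfloor$.

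When $n$ is even, $\lfloor\frac{n}{2}\rfloor=\frac{n}{2}$ while $\lfloor\frac{n-1}{2}\rfloor=\frac{n-2}{2}$, so the upper bound from Theorem \ref{main theorem1} becomes $3n+\lfloor\frac{n-1}{2}\rfloor+1$. Sandwiching the lower bound from \cite{subm} with this upper bound then gives the two-value window
\[
3n+\Big\lfloor\frac{n-1}{2}\Big\rfloor\leq R(\mathcal{C}^4_n,\mathcal{C}^4_n)\leq 3n+\Big\lfloor\frac{n-1}{2}\Big\rfloor+1,
\]
exactly as claimed. There is no real obstacle here: the whole content of the corollary is a bookkeeping consequence of Theorem \ref{main theorem1} and the standard construction in \cite{subm}; the only subtlety is the parity-based identity relating $\lfloor n/2\rfloor$ and $\lfloor (n-1)/2\rfloor$, and one simply needs to record the two cases separately.
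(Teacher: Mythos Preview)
Your proposal is correct and follows exactly the approach the paper indicates: the corollary is derived by combining the lower bound from Lemma~1 of \cite{subm} with the upper bound from Theorem~\ref{main theorem1}, and the parity split between $\lfloor n/2\rfloor$ and $\lfloor (n-1)/2\rfloor$ is the only detail to record. The paper gives no further argument beyond citing these two ingredients, so your write-up simply makes explicit what the paper leaves implicit.
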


Clearly using the above results on the Ramsey number of loose cycles and Theorem \ref{connection}, we obtain the following results.

\begin{theorem}
If $n\geq m+1\geq 4$ or $n=m$ is odd, then
\begin{eqnarray*} R(\mathcal{P}^4_n,\mathcal{C}^4_m)=3n+\Big\lfloor\frac{m+1}{2}\Big\rfloor.\end{eqnarray*}
\end{theorem}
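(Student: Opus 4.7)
The plan is to derive this theorem as a direct consequence of Theorem \ref{connection} combined with the Ramsey numbers of $4$-uniform loose cycles that have already been established in this paper. Recall that Theorem \ref{connection} states that whenever $R(\mathcal{C}^k_n,\mathcal{C}^k_m)=(k-1)n+\lfloor (m-1)/2\rfloor$ holds for integers $n\geq m\geq 2$, one automatically obtains the path-versus-cycle identity $R(\mathcal{P}^k_n,\mathcal{C}^k_m)=(k-1)n+\lfloor (m+1)/2\rfloor$; and the case $n=m$ of that theorem gives $R(\mathcal{P}^k_n,\mathcal{P}^k_n)=(k-1)n+\lfloor (m+1)/2\rfloor$ as well. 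Setting $k=4$ gives exactly the value $3n+\lfloor (m+1)/2\rfloor$ claimed in our statement.

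First I would split into the two cases in the hypothesis. For $n\geq m+1\geq 4$, Theorem \ref{main theorem4} just proved gives $R(\mathcal{C}^4_n,\mathcal{C}^4_m)=3n+\lfloor (m-1)/2\rfloor$, which is precisely the hypothesis needed to invoke Theorem \ref{connection} with $k=4$; applying it yields $R(\mathcal{P}^4_n,\mathcal{C}^4_m)=3n+\lfloor (m+1)/2\rfloor$. For $n=m$ with $n$ odd, Corollary \ref{R(C^4_n,C^4_n)} gives $R(\mathcal{C}^4_n,\mathcal{C}^4_n)=3n+\lfloor (n-1)/2\rfloor$, which again matches the hypothesis of Theorem \ref{connection}; applying it (in the diagonal form) produces $R(\mathcal{P}^4_n,\mathcal{C}^4_n)=3n+\lfloor (n+1)/2\rfloor$.

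As noted in the paper, by Lemma~1 of \cite{subm} only the upper bound in each Ramsey equality needs to be verified; this is built into the statement of Theorem \ref{connection}, so no separate lower bound construction is required here. There is really no substantive obstacle: the heavy lifting has already been done in Lemma \ref{cn-1 implies cm for n>m}, Theorem \ref{main theorem4}, and Corollary \ref{R(C^4_n,C^4_n)}, and the present theorem is merely the translation from cycle--cycle Ramsey numbers to path--cycle Ramsey numbers via the general reduction of Theorem \ref{connection}. The only care needed is to check that the parity conditions and the range $n\geq m\geq 3$ required by Theorem \ref{connection} are met in each of our two cases, which is immediate from the hypotheses $n\geq m+1\geq 4$ and $n=m\geq 3$ odd (the case $n=m=3$ being already covered by Theorem \ref{R(Pk3,Pk3)}(a)).
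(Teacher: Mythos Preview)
Your proposal is correct and matches the paper's approach exactly: the paper simply states that the theorem follows from Theorem~\ref{connection} together with the cycle--cycle Ramsey numbers established in Theorem~\ref{main theorem4} and Corollary~\ref{R(C^4_n,C^4_n)}, and you have spelled out precisely this derivation. One tiny remark: Theorem~\ref{connection} actually assumes $n\geq m\geq 2$, not $n\geq m\geq 3$, so no separate treatment of $n=m=3$ is strictly necessary (though your observation that Theorem~\ref{R(Pk3,Pk3)}(a) covers it directly is of course also valid).
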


\begin{theorem} Let $n\geq m\geq 3.$ 
If $n\geq m+2\geq 5$ or $n$ is odd, then
\begin{eqnarray*}R(\mathcal{P}^4_n,\mathcal{P}^4_m)=3n+\Big\lfloor\frac{m+1}{2}\Big\rfloor.\end{eqnarray*}
\end{theorem}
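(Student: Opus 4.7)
The plan is to deduce this path-Ramsey result directly from the cycle-Ramsey theorems proved above, by feeding them into the ``transfer lemma'' Theorem~\ref{connection}. The lower bound is automatic from Lemma~1 of \cite{subm} (as explained in the introduction), so I only need to verify the upper bound. The structure splits the hypothesis ``$n\geq m+2\geq 5$ or $n$ is odd'' into three disjoint cases: (A) $n\geq m+2\geq 5$; (B) $n=m$ with $n$ odd; (C) $n=m+1$ with $n$ odd (so $m=n-1$ is even). Note that $n$ odd with $n\geq m+2$ is subsumed by (A).

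In Case (A), Theorem~\ref{main theorem4} applied with the smaller cycle having length $m+1$ (legal because $n\geq (m+1)+1$ and $m+1\geq 4$) gives $R(\mathcal{C}^4_n,\mathcal{C}^4_{m+1})=3n+\lfloor m/2\rfloor$. Plugging $m+1$ into the second identity of Theorem~\ref{connection} (whose hypothesis is precisely the cycle-Ramsey equality just obtained) yields
\begin{eqnarray*}
R(\mathcal{P}^4_n,\mathcal{P}^4_m)=R(\mathcal{P}^4_n,\mathcal{P}^4_{(m+1)-1})=3n+\Big\lfloor\frac{m+1}{2}\Big\rfloor,
\end{eqnarray*}
which is the desired value.

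In Case (B), Corollary~\ref{R(C^4_n,C^4_n)} gives $R(\mathcal{C}^4_n,\mathcal{C}^4_n)=3n+\lfloor (n-1)/2\rfloor$ since $n$ is odd; invoking the ``Moreover'' clause of Theorem~\ref{connection} then produces $R(\mathcal{P}^4_n,\mathcal{P}^4_n)=3n+\lfloor (n+1)/2\rfloor=3n+\lfloor (m+1)/2\rfloor$, as required. In Case (C), the same Corollary applies (with $n$ odd), so Theorem~\ref{connection} (the second identity, with the role of $m$ played by $n$) delivers $R(\mathcal{P}^4_n,\mathcal{P}^4_{n-1})=3n+\lfloor n/2\rfloor$; since $n$ is odd and $m=n-1$, the quantities $\lfloor n/2\rfloor$ and $\lfloor (m+1)/2\rfloor$ coincide, finishing this case.

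There is essentially no real obstacle here: the entire content has already been packaged in Theorem~\ref{main theorem4}, Corollary~\ref{R(C^4_n,C^4_n)}, and Theorem~\ref{connection}, so the only work is case bookkeeping and checking the parity-dependent floors match. The sole subtle point to double-check is the parity arithmetic in Case (C), where one must confirm that $\lfloor n/2\rfloor = \lfloor(m+1)/2\rfloor$ holds exactly because $n$ is odd (if $n=m+1$ were even, the two floors would disagree, which is precisely why that remaining even sub-case is absent from the theorem and corresponds to the off-by-one ambiguity of Corollary~\ref{R(C^4_n,C^4_n)}).
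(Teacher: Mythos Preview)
Your proof is correct and is exactly the argument the paper has in mind: the paper gives no explicit proof and simply declares the result a consequence of Theorem~\ref{connection} together with Theorem~\ref{main theorem4} and Corollary~\ref{R(C^4_n,C^4_n)}, which is precisely your case split (A)/(B)/(C). The only minor omission is that in Case~(B) with $n=m=3$ you invoke Corollary~\ref{R(C^4_n,C^4_n)}, which is stated only for $n\geq 4$; this case is covered directly by Theorem~\ref{R(Pk3,Pk3)}(a) (or equivalently by feeding $R(\mathcal{C}^4_3,\mathcal{C}^4_3)=10$ from that theorem into Theorem~\ref{connection}).
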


\footnotesize

\end{document}